\date{}
\newtheorem{proposition}{Proposition}[section]
\newtheorem{theorem}[proposition]{Theorem}
\newtheorem{lemma}[proposition]{Lemma}
\newtheorem{definition}[proposition]{Definition}
\newtheorem{corollary}[proposition]{Corollary}
\def\der{\partial }
\def\nFM0{{\nu }_{F,M_0}}
\def\nFN0{{\nu }_{F,N_0}}
\def\nGN0{{\nu }_{G,N_0}}
\def\N0{ {\bf N}_0 }
\def\t{\otimes}
\def\g{\gamma}
\def\ra{\rightarrow}
\def\Xpm{X^{\pm }}
\def\s{\sigma}
\def\l1{{\lambda}_1}
\def\a{\alpha}
\def\a0{ {\alpha }_0}
\def\a1{ {\alpha }_1}
\def\l{\lambda}
\def\o{\omega}
\def\nFGM0{{\nu }_{F,G,M_0}}
\def\nFN0{{\nu}_{F,N_0}}
\def\sm{{\sigma}^m}
\def\sm1{{\sigma}^{-1}}
\def\smtp1{{\sigma}^{-t+1}}
\def\o{\omega }
\def\S1{S^{-1}}
\def\Xpm1{X^{\pm 1}_1}
\def\sPM1{{\sigma }^{\pm 1}}
\def\sMP1{{\sigma }^{\mp 1 }}
\def\b{\beta}
\def\di{{\rm d.ind}}
\def\L{\Lambda}
\def\G{\Gamma}
\def\CA{{\cal A}}
\def\CD{{\cal D}}
\def\Ytm1{Y^{t-1}}
\def\Yim1{Y^{i-1}}
\def\CN{{\cal N}}
\def\CF{{\cal F}}
\def\CG{{\cal G}}
\def\supp{{\rm supp }}
\def\Aut{{\rm Aut}}
\def\dim{{\rm dim }}
\def\gr{ {\rm gr} }
\def\SL2Z{ {\rm SL}_2({\bf Z}) }
\def\Gp1{ G^{1 , 1 } }
\def\P11{ P^{-1 , 1 } }
\def\Pp1{ P^{1 , 1 } }
\def\nCLsr{{}^\nu\kern-2pt {\cal L}^{\sigma , \rho  }}
\def\nP{{}^\nu \kern-2pt P}
\def\nL{{}^\nu\kern-2pt L}
\def\nLL{{}^\nu\kern-2pt \Lambda}
\def\nPsr{{}^\nu\kern-2pt P^{\sigma , \rho  }}
\def\nLsr{{}^\nu\kern-2pt L^{\sigma , \rho  }}
\def\nuCL{{}^\nu\kern-2pt  {\cal L}}
\def\nCLsr{{}^\nu\kern-2pt {\cal L}^{\sigma , \rho  }}
\def\nCL1m{{}^\nu\kern-2pt {\cal L}^{-1 , 1  }}
\def\x1nu{x^\frac{1}{\nu}}
\def\xm1nu{x^{-\frac{1}{\nu}}}
\def\CN{{\cal N}}
\def\ra{\rightarrow }
\def\CB{{\cal B}}
\def\nAM0{{\nu }_{{\cal A},M_0}}
\def\nAN0{{\nu }_{{\cal A},N_0}}
\def\End{ {\rm End }}
\def\gr{\mathfrak{r}}
\def\SL{{\rm SL}}
\def\di!{\frac{\der^i}{i!}}
\def\dik!{\frac{\der^k_i}{k!}}
\def\gl{\mathfrak{l}}
\def\Max{{\rm Max}}
\def\N{\mathbb{N}}
\def\0{\overline{0}}
\def\1{\overline{1}}
\def\Ln1{\L_{n,\overline{1}}}
\def\a1{a_{\overline{1}}}
\def\S{\Sigma}
\def\vn1{\overrightarrow{n-1}}
\def\im{{\rm im}}
\def\gl{{\rm gl}}
\def\sl{{\rm sl}}
\def\mJ{\mathbb{J}}
\def\mI{\mathbb{I}}
\def\ann{{\rm ann}}
\def\K1{{\rm K}_1}
\def\hmI1{\widehat{\mI_1}}
\def\tmI1{\widetilde{\mI_1}}
\def\tmJ1{\widetilde{\mJ_1}}
\def\hB1{\widehat{B_1}}
\def\hCB1{\widehat{\CB_1}}
\def \S{\mathcal{S}}
\def\sl2{\mathfrak{sl}_2}
\def\Deg{{\rm Deg}}
\def\Ind{{\rm Ind}}
\def\sl2{\mathfrak{sl}_2}
\def\gl2{\mathfrak{gl}_2}
\def\b1{\overline{1}}
\def\fC{{\mathfrak{C}}}
\def\fCK{{\mathfrak{C}}(K)}
\def\fCdK{{\mathfrak{C}}_d(K)}
\def\gl{{\mathfrak{l}}}
\newenvironment{proof*}[1][\proofname]{\par
  \pushQED{\qed}%
  \normalfont \partopsep=\z@skip \topsep=\z@skip
  \trivlist
  \item[\hskip\labelsep
        \itshape
    #1\@addpunct{.}]\ignorespaces
}{%
  \popQED\endtrivlist\@endpefalse
}
\begin{document}

\author{V. V. \  Bavula %(GaloisTh-RingThAp.tex)
}
\title{ The Galois Theory (a ring theoretic approach)
}

\maketitle

\begin{abstract}
The fundamental concepts in the Galois Theory are  %root of  polynomial, the minimal polynomial,
 separable, normal   and  Galois field extensions. These concepts are central in proofs of the Galois Theory.  
In the paper, we introduce a new approach, a ring theoretic approach, to the Galois Theory which is based on central simple algebras and none of the above concepts are used or even mentioned. The only concept which is used is `G-extension' (a finite field extension $L/K$ is called a {\em G-extension}  if the endomorphism algebra $\End_K(L)$ is generated by the field $L$ and the automorphism group $\Aut_{K-{\rm alg}}(L)$). So, G-extensions are the most symmetric field extensions. In this approach, the Galois Correspondences (for subfields and Galois subfields of $L$) are deduced from the Double Centralizer Theorem which is applied to the central simple algebra  $\End_K(L)$ and G-extensions. Since the class of G-extensions {\em coincides} with the class of Galois extensions, all main results of the Galois Theory are obtained from the `corresponding' results for G-extensions. This approach gives a new conceptual (short) proofs of key results of the Galois Theory. It also reveals that the `maximal symmetry' (of field extensions) is the essence of the Galois Theory. 
 
$\noindent$

{\bf Key words}: G-extension, Galois extension, the Galois group, the Galois correspondence,  skew group algebra,  centralizer, central simple algebra, the Double Centralizer Theorem.

$\noindent$

{\bf  Mathematics subject classification 2020}: 
11S20, 12F05, 12F10, 16G10.

{ \small \tableofcontents}

\end{abstract}

%%%%%%%%%%%%%%%%%% SECTION 1 %%%%%%%%%%%%%%%%%%%%%

\section{Introduction} \label{INTR} %\marginpar{INTR}

In this paper, module means a {\em left} module.  The following
notation will remain fixed throughout the paper (if it is not
stated otherwise): 

\begin{itemize}

\item $L/K$ is a finite field extension,

\item  $\CF (L/K)$ and  $\CG (L/K)$ are the sets of all and Galois subfields of $L/K$, respectively,

\item   $G:=G(L/K)$ is the automorphism group of the $K$-algebra $L$, $\CG (G(L/K))$ and  $\CN(G(L/K))$ are  the sets of all and normal subgroups of $G(L/K)$, respectively.  If $L/K$ is a Galois field extension then  $G(L/K)$ is called the Galois group of $L/K$, 

\item $L^{G(L/K)}:=\{ l\in L\, | \, \s (l)=l$ for all automorphisms $\s\in G(L/K)\}$ is the field of $G(L/K)$-invariants in $L$,

\item For a finite field extension $L/K$ and its intermediate subfield $K\subseteq M\subseteq L$, let
$$
G(L/K)^M:= \{ g\in G(L/K)\, | \, g(m)=m\;\; {\rm for\; all}\;\; m\in M \}=C_{G(L/K)}(M),
$$

\item $E:=E(L/K):=\End (L/K):=\End_K (L)\simeq M_n(K)$ is the endomorphism algebra of the $K$-vector space $L$ and  $M_n(K)$ is the algebra of $n\times n$ matrices over $K$ where $n=[L:K]:=\dim_K(L)$ is the degree of the field extension $L/K$,

\item $\CA (E,L)$ is the set of all $K$-subalgebras of $E$ that contain the field $L$, 

\item For  an algebra $A\in  \CA (E,L)$,  let
$$ 
L^{A\cap G(L/K)}:= \{  l \in L\, | \, g(l)=l\;\; {\rm for\; all}\;\; g\in A\cap G(L/K) \},
$$
\item $\CA (E,L, G):=\{ A\in  \CA (E,L)\, | \, gAg^{-1}\subseteq A$ for all $g\in G\}$, 

\item $\fCK$ is  the class of   central simple finite dimensional  $K$-algebras and  $\fCdK$ is  the class of  central simple finite dimensional  division $K$-algebras,

%\item

\end{itemize}

Let $K$ be  field, $L/K$ be a finite field extension, $G=G(L/K)$ be the automorphism group of the $K$-algebra $L$ and $E=E(L/K)$ be the algebra of $K$-endomorphisms of the $K$-module $L$. The $K$-algebra $E\simeq M_{[L:K]}(K)$ is a central simple $K$-algebra where $[L:K]:=\dim_K(L)$ is the {\em degree} of $L$ over $K$. The algebra $E$ is the `universe' in which all the major objects of the  paper leave and interact and the reason why we use some  results for central simple algebras. In particular, $G\subseteq E$ and the field $L$ is a $K$-subalgebra of $E$ via the $K$-algebra  monomorphism: 
$$
L\ra E, \;\; l\mapsto l\cdot: L\ra L, \;\; \l \mapsto l\l.
$$
A subalgebra of $E$ which is generated by the field $L$ and the group $G$ is the {\em skew  group algebra} (Theorem \ref{BC24Mar25}.(2)) 
$$
L\rtimes G=\bigoplus_{g\in G}Lg
$$
where the multiplication is given by the rule: For all elements $\l,\mu \in L$ and $g,h\in G$,
$$
 \l g \cdot \mu h=\l g(\mu)gh.
 $$

\begin{definition}
 A finite field extension $L/K$ is called a {\bf G-extension} if 
 $$E=L\rtimes G.$$
 \end{definition}
By the definition, G-extensions are the {\em most symmetric} field extensions (as far as their automorphism groups are concerned). This also follows  from the fact that:  
  {\em A finite field extension  is a G-extension iff it is a Galois field extension} (Proposition \ref{A8May25}).
  
  The following two results are the main results of the classical Galois Theory: {\em Let a finite field extension $L/K$ be Galois. Then:}
  \begin{enumerate}

\item {\bf (The Galois Correspondence for subfields)} 
{\em The map
$$
\CF (L/K)\ra \CG (G(L/K)), \;\; M\mapsto  G(L/M)
$$ 
is a bijection with inverse} $
H\mapsto L^H.$

\item {\bf (The  Galois Correspondence for Galois subfields)} 
{\em The map
$$
\CG (L/K)\ra \CN (G(L/K)), \;\; \G\mapsto  G(L/\G)
$$ 
is a bijection with inverse} $
\G\mapsto L^{\G}.$

\end{enumerate}

We prove these results using {\em only} the definition of G-extension without mentioning the key ingredients of the Galois Theory 
%(root of  polynomial, the minimal polynomial,
 (separable and normal  field extensions and the Galois field extension). \\

{\bf Key ideas and steps behind the proofs}: For  subset $S\subseteq E$, $C_E(S):=\{ a\in E\, | \, as=sa$ for all $s\in S\}$ is  the {\em centralizer} of $S$ in $E$. The centralizer $C_E(S)$ is a $K$-algebra.

\begin{itemize}

\item (Lemma \ref{ac1Jun25}) {\em For every field $M\in \CF (L/K)$, the field extension $L/M$ is a G-extension.}

Let $\CA (E,L)$ and $\CA (E,L, sim)$ be  the sets of all  and simple $K$-subalgebras of $E$ that contain the field $L$, respectively. The next result gives an explicit description of these sets and an explicit bijection between the sets $\CF (L/K)$ and  $\CA (E,L)$.

\item (Theorem \ref{B24Mar25}.(2,3)) $\CA ( E, L)=\CA ( E, L, sim)=\{ E(L/M)\, | \, M\in \CF (L/K) \}$ {\em and the map 
$$\CF (L/K)\ra \CA ( E, L), \;\; M\mapsto C_E(M)=E(L/M)$$  is a bijection with inverse} $B\mapsto C_E(B)$. 

Theorem \ref{B24Mar25}.(2,3) is the key result from which the two Galois correspondences follow by finding explicitly formulae for the centralizers $C_E(M)$ and $C_E(B)$ using the equalities 
$E=L\rtimes G$ and $E(L/M)=L\rtimes G(L/M)$, see Theorem \ref{1Jun25} below.

\item (Theorem \ref{1Jun25}) {\em Let $L/K$ be a  G-extension (i.e. a Galois extension). Then:}

\begin{enumerate}

\item $ \CA (E,L)=\{ C_{E}(M)=L\rtimes G(L/M)=L\rtimes G(L/K)^M\, | \, L\in \CF (L/K)\}$. 

\item {\em The map
$$
\CF (L/K)\ra \CA (E,L), \;\; M\mapsto  C_{E}(M)=L\rtimes G(L/M)=L\rtimes G(L/K)^M
$$ 
is a bijection with inverse}
$$
A\mapsto C_{E}(A)=L^{A\cap G(L/K)}.
$$
\item {\bf (The  Galois Correspondence)} 
{\em  The map
$$
\CF (L/K)\ra \CG (G(L/K)), \;\; M\mapsto  G(L/M)=G(L/K)^M
$$ 
is a bijection with inverse} $
H\mapsto L^H.$
\end{enumerate}
The  Galois Correspondence  is obtained from statement 2 by replacing the set $\CA (E,L) $ by the set  $\CG (G(L/K))$ and the algebra $L\rtimes  G(L/M)$ by the group $G(L/M)$ (by which the algebra 
$L\rtimes  G(L/M)$ is {\em uniquely} determined). 

Recall that   $\CG  (L/K)$ is the set of all Galois subfields of $L/K$ and 
$$
\CA (E, L, G):=\{ A\in \CA (E, L)\, | \, gAg^{-1}\subseteq A\;\; {\rm  for\; all}\;\;  g\in G\},
$$  
the set of $G$-{\em stable} subalgebras of $E$ that contain the field $L$. Theorem \ref{11Jun25}.(1) describes the set $\CA (E, L, G)$ in terms of the Galois subfields of $L$ and normal subgroups of $G$. Theorem \ref{11Jun25}.(2) gives a bijection between the sets $\CG (L/K)$ and $\CA (E, L, G)$.

\item (Theorem \ref{11Jun25})  {\em Let $L/K$ be a  G-extension (i.e. a Galois extension). Then:}

\begin{enumerate}

\item $\CA (E,L, G)=\{C_E(\G )=L\rtimes G(L/\G)\, | \,$ $ \G\in \CG  (L/K)  \}=\{L\rtimes N\, | \, N\in \CN  (G)  \}$  and $\CN (G)= \{ G(L/\G)\, | \,$ $ \G\in \CG  (L/K)  \}$.

\item {\em The map
$$
\CG (L/K)\ra  \CA (E,L, G), \;\; \G \mapsto  C_{E}(M)=L\rtimes G(L/\G) 
$$
 is a bijection with inverse}
$$ A\mapsto C_{E}(A)=L^{A\cap G}.
$$
\item {\bf (The  Galois Correspondence for Galois subfields)} 
{\em The map
$$
\CG (L/K)\ra \CN (G), \;\; \G\mapsto  G(L/\G)=G(L/K)^{\G}
$$ 
is a bijection with inverse} $
\G\mapsto L^{\G}.$

\end{enumerate}

The  Galois Correspondence for Galois subfields of $L$  is obtained from statement 2 by replacing the set $\CA (E,L, G) $ by the set  $\CN (G)$ and the algebra $L\rtimes  G(L/\G)$ by the group $G(L/\G)$ (by which the algebra 
$L\rtimes  G(L/\G)$ is {\em uniquely} determined). 

\end{itemize}

The paper is organized as follows. In Section \ref{AAEL}, we recall  some classical results on central simple algebras (the Noether-Skolem Theorem, the Double centralizer Theorem, etc). For a central simple algebra $A\in \fCK$ and its strongly maximal subfield $L$, Theorem 
 \ref{A24Mar25} is a description  of  subalgebras of $A$  that contain the field $L$. Applying it to the central simple algebra $E(L/K)$ and its strongly maximal subfield $L$, a description  of  subalgebras of $E(L/K)$  that contain the field $L$ is obtained (Theorem  \ref{B24Mar25}). We describe
   some of  the  properties of the algebra $L\rtimes G$ (Theorem \ref{BC24Mar25}) and the sets  $\CA (L\rtimes G, L)$ and $\CA (L\rtimes G, L, G)$ (Corollary \ref{a24Mar25}).

In Section \ref{AN-GT-NORMAL}, proofs are given to 
  Theorem \ref{1Jun25} and  Theorem \ref{11Jun25}.  
Several useful results on G-extensions are proven that are used in the proofs of the two theorems. \\

{\bf Generalizations:  B-extensions and analogue of the  Galois Theory for normal fields.}  In \cite{Jacobson-1937, Jacobson-1944}, Jacobson  introduced an analogue  of the Galois theory for {\em purely inseparable} field  extensions  $L/K$ of exponent 1 (meaning that the $p$'th power of every element of $L$ is in $K$), where the Galois groups (which are trivial) are replaced by restricted Lie algebras of derivations.

 A purely inseparable extension is called a {\bf modular extension} if it is a tensor product of simple extensions. In particular, every extension of exponent 1 is modular, but there are non-modular extensions of exponent 2 as shown by Weisfeld \cite{Weisfeld-1965}. Sweedler \cite{Sweedler-1968} and Gerstenhaber  and  Zaromp \cite{Gerstenhaber-Zaromp-1970} gave an extension of the Galois correspondence to {\em modular} purely inseparable extensions, where derivations are replaced by higher derivations.

For a finite field extension $L/K$, the algebra $E(L/K)$ contains the group $G(L/K)$ and the algebra $\CD (L/K)$ of differential operators on the $K$-algebra $L$. A $K$-subalgebra of $E(L/K)$ which generated by the group $G(L/K)$ and the algebra $\CD (L/K)$ is the skew group algebra 
$$
\CD (L/K)\rtimes G(L/K)=\bigoplus_{g\in G(L/K)}\CD (L/K)g\subseteq E(L/K),
$$
 \cite{CDA-NormalMaxFld}. {\em The field extension is separable iff} $\CD (L/K)=L$, \cite{CDA-NormalMaxFld}.

 \begin{definition}
(\cite{CDA-NormalMaxFld})  A finite field extension $L/K$ is called a {\bf B-extension} if 
 $$E(L/K)=\CD (L/K)\rtimes G(L/K).$$
 \end{definition}
 `B' stands for `Bi'=`2', i.e. both fundamental objects that are attached to the field extension $L/K$, the algebra of differential operators $\CD (L/K)$ and the automorphism group $G(L/K)$, play a key role in the definition. By the definition, 
 the  field  extension $L/K$ is a B-extension iff the algebra $\CD (L/K)\rtimes G(L/K)$ is the largest  possible. So, B-extensions are the most symmetrical field extensions.  Galois field extensions $L/K$ are precisely B-extensions with $E(L/K)=L\rtimes G(L/K)$. Purely inseparable field extensions $L/K$ are  precisely B-extensions with $E(L/K)=\CD (L/K)$, \cite{CDA-NormalMaxFld}. 
 Surprisingly, {\em the class of B-extensions coincide with the class of normal extensions,} \cite{CDA-NormalMaxFld}. This gives a new characterizeion of normal fields. In \cite{CDA-NormalMaxFld}, using a similar ideas an analogue of the Galois Theory is developed for B-extension, i.e. for normal field extensions.

 %%%%%%%%%%%%%%%%%% SECTION 2 %%%%%%%%%%%%%%%%%%%%%

\section{ The subalgebra $L\rtimes G(L/K)$ of the endomorphism algebra $E(L/K)$} \label{AAEL} %\marginpar{AAEL}

 At the beginning of the section, we collect results on the central simple algebras that are used in the paper. The key results of the section are  Theorem 
 \ref{A24Mar25} and  Theorem  \ref{B24Mar25}.
 For a central simple algebra $A\in \fCK$ and its strongly maximal subfield $L\in \Max_s(A)$, Theorem 
 \ref{A24Mar25}  describes subalgebras of $A$  that contain the field $L$. For a finite field extension $L/K$,  Theorem \ref{B24Mar25} describes subalgebras of the endomorphism algebra $E$ that contain the field $L$. Theorem \ref{BC24Mar25} describes some of  the  properties of the algebra $L\rtimes G$. Corollary \ref{a24Mar25} describes the sets $\CA (L\rtimes G, L)$ and $\CA (L\rtimes G, L, G)$.\\

 {\bf Central simple algebras.}    Let us recall some basic facts and definitions on central simple algebras, see the book \cite{Pierce-AssAlg} for details. For a field $K$, we denote by $\fCK$  the class of   central simple finite dimensional  $K$-algebras. Let  $\fCdK$ the class of   central simple finite dimensional  division $K$-algebras. Clearly,  $\fCdK\subseteq \fCK $. A 
$K$-algebra belongs to the class $\fCK$ iff $A\simeq M_n(D)$ for some $D\in \fCdK$.

 The dimension of a division $K$-algebra $D$ over a field $K$ is denoted by $\dim_K(D)=[D:K]$.  For each algebra $\CA\in \fCK$, 
$$
\dim_K(\CA )=n^2\;\; {\rm for\; some\; natural\; number}\;\; n\geq 1
$$
 which is called the {\bf degree} of $\CA $ and is denoted by $\Deg (\CA )$. The algebra $\CA$ is isomorphic to the matrix algebra $M_l(D)$ over a division algebra $D\in \fCK$ which is unique (up to a $K$-isomorphism). Clearly, $\Deg (\CA ) = l\Deg (D)$. The degree $\Deg (D)$ is called the {\bf index} of $\CA$ and is denoted by $\Ind (\CA )$. If $L$ is a  subfield of $\CA$ that contains the field $K$ then 
 $$
 [L:K]\leq \Deg (\CA ).
 $$
  The field $L$ is called a {\bf strictly maximal} subfield if $[L:K]= \Deg (\CA )$. The sets of maximal and strictly maximal subfields of $A\in \fCK$ are denoted by $\Max (A)$ and $\Max_s(A)$, respectively. In general, the containment $\Max_s(A)\subseteq \Max (A)$ is strict but for all central division algebras $D\in \fCdK$,  the equality 
  $$
  \Max_s(D)= \Max (D)
  $$ 
  holds.

\begin{theorem}\label{Noether-SkolemThm}%\marginpar{Noether-SkolemThm}
{\bf (The Noether-Skolem Theorem)} Let $\CA\in \fCK$,  $A$ and $B$ be isomorphic,   simple subalgebras of $\CA$. Then $B=uAu^{-1}$ for some unit $u\in \CA^\times$.
\end{theorem}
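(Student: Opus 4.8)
The plan is to reduce the statement to a uniqueness statement about simple modules over the tensor product algebra $\CA \otimes_K \CA^{\mathrm{o}}$, where $\CA^{\mathrm{o}}$ denotes the opposite algebra. First I would recall the key structural fact: since $\CA \in \fCK$ is central simple and $\CA^{\mathrm{o}}$ is as well, the tensor product $\CA \otimes_K \CA^{\mathrm{o}}$ is again central simple over $K$ (this is a standard property of $\fCK$, available from the cited Pierce text), hence isomorphic to a matrix algebra over a division $K$-algebra, and in particular it is a \emph{simple} Artinian ring. A simple Artinian ring has, up to isomorphism, a \emph{unique} simple left module, and every finitely generated module is a finite direct sum of copies of it; two finitely generated modules are isomorphic iff they have the same $K$-dimension.

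The second step is to turn the two embeddings of the simple algebra into module structures. Let $\iota_A\colon A\hookrightarrow \CA$ and $\iota_B\colon B\hookrightarrow \CA$ be the given inclusions and let $\phi\colon A\xrightarrow{\sim} B$ be the given isomorphism, so that $\iota_B\circ\phi$ and $\iota_A$ are two (non-isomorphic in general) algebra maps $A\to\CA$. Using $\iota_A$ on the left and right multiplication on $\CA$, one makes $\CA$ into a left module over $A\otimes_K \CA^{\mathrm{o}}$ via $(a\otimes c)\cdot x = \iota_A(a)\,x\,c$; using $\iota_B\circ\phi$ instead gives a second module structure on the same underlying space $\CA$, via $(a\otimes c)\cdot x = \iota_B(\phi(a))\,x\,c$. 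Call these modules $V_1$ and $V_2$. Now $A$ is simple by hypothesis, and $A\otimes_K \CA^{\mathrm{o}}$ is simple (again by the tensor-product property, since $A$ is central simple — here one uses that any simple subalgebra of a central simple $K$-algebra containing $K$ is itself central simple over its center, and one passes to a splitting field if necessary, or more cleanly one invokes that $A\otimes_K\CA^{\mathrm{o}}$ is a finite-dimensional algebra with no nontrivial two-sided ideals). Both $V_1$ and $V_2$ have $K$-dimension equal to $\dim_K(\CA)$, so by the uniqueness of modules over a simple Artinian ring there is an isomorphism $\theta\colon V_1\to V_2$ of $A\otimes_K\CA^{\mathrm{o}}$-modules.

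The third step extracts the conjugating unit. The map $\theta\colon \CA\to\CA$ is in particular a right $\CA$-module isomorphism (taking $a=1$ in the action), so it is left multiplication by the element $u:=\theta(1)\in\CA$; being bijective, $u$ is a unit, i.e.\ $u\in\CA^\times$. Intertwining the left actions gives, for all $a\in A$ and all $x\in\CA$,
$$
u\,\iota_A(a)\,x = \theta\big(\iota_A(a)\,x\big) = \iota_B(\phi(a))\,\theta(x) = \iota_B(\phi(a))\,u\,x,
$$
and setting $x=1$ yields $u\,\iota_A(a) = \iota_B(\phi(a))\,u$, that is, $\phi(a) = u\,a\,u^{-1}$ for all $a\in A$ (identifying $A$ and $B$ with their images). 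Hence $B = uAu^{-1}$, as claimed.

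The main obstacle, and the point deserving the most care, is the simplicity of $A\otimes_K \CA^{\mathrm{o}}$: the slick statement ``tensor product of central simple algebras is central simple'' applies directly only if $A$ itself is \emph{central} over $K$, whereas a simple subalgebra $A$ merely has some field $Z(A)\supseteq K$ as its center. The clean fix is to note that we only need $A\otimes_K\CA^{\mathrm{o}}$ to be \emph{simple} (not central), and simplicity of $A\otimes_K B$ for $A$ simple and $B$ central simple follows from the standard lemma that every two-sided ideal of $A\otimes_K B$ is of the form $I\otimes_K B$ for a two-sided ideal $I$ of $A$ — a fact proved exactly as in the central case by a minimal-length-tensor argument. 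With that lemma in hand the rest of the argument is bookkeeping; all the inputs (tensor products within $\fCK$, structure of simple Artinian rings, the ideal lemma) are classical and may be quoted from \cite{Pierce-AssAlg}.
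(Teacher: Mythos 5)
Your proposal is correct, but there is nothing in the paper to compare it against: the paper states the Noether--Skolem Theorem without proof, quoting it as one of the classical facts on central simple algebras to be found in the cited reference (Pierce, \emph{Associative Algebras}). What you have written is precisely the standard textbook argument: realize $\CA$ as a module over $A\otimes_K\CA^{\rm o}$ in two ways via the two embeddings, use simplicity of $A\otimes_K\CA^{\rm o}$ and equality of $K$-dimensions to get a module isomorphism, and read off the conjugating unit as $\theta(1)$. You also correctly flag and resolve the one genuine subtlety, namely that $A$ need not be central over $K$, so one must invoke the lemma that $A\otimes_K B$ is simple whenever $A$ is simple and $B$ is central simple (every two-sided ideal has the form $I\otimes_K B$), rather than the ``central simple $\otimes$ central simple'' statement. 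The argument is complete and sound; within the logic of this paper it simply replaces a citation with a proof.
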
 
 
 For a subset $S$ of $A$, the set  
$C_A(S)=\{ a\in A\, | \, as=sa\}$ 
 is called the {\bf centralizer} of $S$ in $A$. The centralizer $C_A(S)$ is a subalgebra of $A$. 
 The algebra $C_AC_A(S)=C_A(C_A(S))$ is called the {\em double centralizer} of $S$.
 Notice that $S\subseteq C_AC_A(S)$. Clearly, 
$C_AC_AC_A(S)=C_A(S)$.

The dimension of a division $K$-algebra $D$ over a field $K$ is denoted by $\dim_K(D)=[D:K]$.

\begin{theorem}\label{DCThm}%\marginpar{DCThm}
Let $A\in \fCK$ and $B$ be a simple subalgebra of $A$.
\begin{enumerate}
\item {\bf (Double Centralizer Theorem)} $C_AC_A(B)=B$.
\item The algebra $C_A(B)$ is a simple algebra. 
\item $\dim_K(B)\, \dim_K(C_A(B))=\dim_K(A)$.
\item Suppose that $B\in \fCK$. Then $C_A(B)\in \fCK$ and $A=B\t C_A(B)$.
\end{enumerate}
\end{theorem}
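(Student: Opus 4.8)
The plan is to derive all four parts from the single structural identity $A\otimes_K A^{\rm op}\cong \End_K(A)$, valid because $A\in\fCK$. First I would turn $A$ into a left module over $S:=B\otimes_K A^{\rm op}$, where $b\otimes a^{\rm op}$ acts on $x\in A$ by $x\mapsto bxa$; under the above identification $S$ becomes a subalgebra of $\End_K(A)\cong M_n(K)$, $n:=\dim_K A$. Since $B$ is simple and $A^{\rm op}$ is central simple, $S$ is a finite-dimensional simple $K$-algebra, so every nonzero $S$-module (in particular $A$) is faithful.

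The computational heart of the argument is the identification $\End_S(A)\cong C_A(B)$. An $S$-linear endomorphism $\phi$ of $A$ is in particular $A^{\rm op}$-linear, i.e. commutes with all right multiplications, hence equals left multiplication $\ell_a$ by $a:=\phi(1)$; and $\ell_a$ is moreover $B$-linear precisely when $ab=ba$ for all $b\in B$. Thus $\End_S(A)=\{\ell_a\, | \, a\in C_A(B)\}\cong C_A(B)$.

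Now I would use the structure theory of the finite-dimensional simple algebra $S\cong M_m(\Delta)$, $\Delta\in\fCdK$: every $S$-module is a sum of copies of the unique simple module $V$, so $A\cong V^{\oplus t}$ as $S$-modules with $t\geq 1$, $\dim_K V=m\dim_K\Delta$, $\End_S(V)\cong\Delta^{\rm op}$, hence $\End_S(A)\cong M_t(\Delta^{\rm op})$. This gives part (2): $C_A(B)\cong M_t(\Delta^{\rm op})$ is simple. For part (3), the dimension bookkeeping $\dim_K S=\dim_K B\cdot n=m^2\dim_K\Delta$ together with $n=tm\dim_K\Delta$ forces $m=t\dim_K B$, whence $\dim_K C_A(B)=t^2\dim_K\Delta=n/\dim_K B$, i.e. $\dim_K B\cdot\dim_K C_A(B)=\dim_K A$. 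Part (1) is then formal: applying (3) to $B$ and to the simple algebra $C_A(B)$ gives $\dim_K C_AC_A(B)=\dim_K B$, and since $B\subseteq C_AC_A(B)$ with equal finite dimension, $C_AC_A(B)=B$. For part (4), with $B\in\fCK$, the multiplication map $\mu\colon B\otimes_K C_A(B)\to A$ is a $K$-algebra homomorphism (the two factors commute inside $A$); its domain is simple (central simple tensor simple), so $\mu$ is injective, and by (3) both sides have dimension $\dim_K A$, so $\mu$ is an isomorphism $A\cong B\otimes_K C_A(B)$; comparing centers, $K=Z(A)=Z(B)\otimes_K Z(C_A(B))=K\otimes_K Z(C_A(B))$, so $C_A(B)$ is central, and simple by (2), i.e. $C_A(B)\in\fCK$.

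The only slightly delicate point is the identification $\End_S(A)\cong C_A(B)$ and keeping the left/right and opposite-algebra conventions straight; the rest is the standard module-theoretic dictionary for simple algebras plus dimension counting, so I expect no real obstacle.
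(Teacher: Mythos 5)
Your proof is correct. Note, however, that the paper does not prove this theorem at all: it is quoted as a classical fact about central simple algebras with a pointer to Pierce's book, so there is no in-paper argument to compare against. What you have written is the standard textbook proof (essentially the one in Pierce, Ch.~12): realize $A$ as a faithful module over the simple algebra $S=B\otimes_K A^{\rm op}$, identify $\End_S(A)$ with $C_A(B)$ via $a\mapsto \ell_a$, and then read off simplicity of $C_A(B)$ and the dimension formula $\dim_K B\cdot\dim_K C_A(B)=\dim_K A$ from the Wedderburn structure of $S\cong M_m(\Delta)$ and the decomposition $A\cong V^{\oplus t}$; parts (1) and (4) then follow formally. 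All the dimension bookkeeping checks out ($m=t\dim_K B$, so $\dim_K C_A(B)=t^2\dim_K\Delta=n/\dim_K B$), and the identification $\End_S(A)\cong C_A(B)$ is handled with the correct orientation (left multiplications compose covariantly). The only inputs you use without proof — that a tensor product of a simple algebra with a central simple algebra is simple, and that $Z(B\otimes_K C)=Z(B)\otimes_K Z(C)$ — are standard and at the same level of granularity as the paper's own citations, so this is an acceptable self-contained replacement for the reference.
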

 
\begin{definition}
Let $A\in \fCK$ and $B$ be a simple subalgebra of $A$. The the pair $(B,C_A(B))$ (of necessarily  simple)  $K$-subagebras of $A$ is called {\bf a double centralizer pair}, a {\bf dcp},  for short.
\end{definition}

If $(B,C_A(B))$ is a dcp then so is $(C_A(B),B)$, and vice versa.\\

 {\bf Description of subalgebras of the endomorphism algebra $E(L/K)$ that contain the field $L$.} 
 Let $A$ be an algebra. For an $A$-module  $M$, the set $\ann_A(M):=\{ a\in A\, | \, aM=0\}$ is called the {\em annihilator} of $M$. The annihilator $\ann_A(M)$ is an ideal of the algebra $A$. A module  is called a {\em faithful} module if its annihilator is equal to zero.

\begin{lemma}\label{a5May25}%\marginpar{a5May25}
A finite dimensional algebra $A$ is a simple algebra iff there is a faithful simple $A$-module. 
\end{lemma}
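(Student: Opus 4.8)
The plan is to treat the two implications separately; the forward one is essentially immediate, and the only real (but classical) content sits in the converse.

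For the forward direction I would argue as follows. Since $A$ is finite dimensional over the ground field $K$ it is left Artinian, hence contains a minimal nonzero left ideal $S$; by minimality, $S$ is a simple left $A$-module. Its annihilator $\ann_A(S)$ is a two-sided ideal of $A$, and it is \emph{proper}, since $1\cdot s=s\neq 0$ for any nonzero $s\in S$. If $A$ is simple, its only two-sided ideals are $0$ and $A$, so $\ann_A(S)=0$; thus $S$ is a faithful simple $A$-module.

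For the converse, let $S$ be a faithful simple $A$-module. The Jacobson radical $\rad(A)$ annihilates every simple $A$-module, so $\rad(A)\subseteq \ann_A(S)=0$; since $A$ is finite dimensional, a zero Jacobson radical forces $A$ to be semisimple, and Wedderburn--Artin gives $A\cong \prod_{i=1}^{r}M_{n_i}(D_i)$ for division $K$-algebras $D_i$. The simple modules over such a product are, up to isomorphism, the $r$ simple modules $S_1,\dots,S_r$ coming from the individual blocks, and $\ann_A(S_i)=\prod_{j\neq i}M_{n_j}(D_j)$. Since $S\cong S_i$ for some $i$ and $S$ is faithful, this annihilator vanishes, which is possible only if $r=1$; hence $A\cong M_{n_1}(D_1)$, and a full matrix algebra over a division ring is simple. (Equivalently one can bypass Wedderburn--Artin and apply the Jacobson density theorem directly to the faithful simple module $S$: with $D:=\End_A(S)$ a division algebra by Schur's Lemma and $\dim_D S=m<\infty$, density together with finite dimensionality yield an isomorphism $A\cong \End_D(S)\cong M_m(D^{\rm op})$, which is simple.)

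I expect the only genuine subtlety to lie in the converse's use of finiteness: a ring merely admitting a faithful simple module is left primitive, and primitivity is strictly weaker than simplicity for infinite-dimensional algebras. So the whole weight of the converse rests on invoking $\dim_K A<\infty$ to collapse the Wedderburn decomposition (equivalently, the density statement) to a single matrix block. Beyond that classical structure input the argument is routine module bookkeeping, and it uses nothing not already available at this point in the paper.
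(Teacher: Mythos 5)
Your proof is correct and follows essentially the same route as the paper: the radical annihilates the faithful simple module and hence vanishes, semisimplicity follows from finite dimensionality, and a semisimple but non-simple algebra admits no faithful simple module. You merely spell out details the paper leaves implicit (the existence of a simple module via a minimal left ideal, and the explicit Wedderburn block annihilators), plus an optional density-theorem detour, but there is no substantive difference in approach.
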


\begin{proof} $(\Rightarrow)$ All nonzero modules of a simple algebra are faithful.

$(\Leftarrow)$ Let $\gr$ be the radical of the finite dimensional  algebra $A$.  
 Suppose that $M$ is a faithful simple $A$-module. Then  $\gr M=0$ (by the simplicity of $M$ and  definition of the radical). This  implies that $\gr =0$, by the faithfulness of the $A$-module $M$. So, the algebra $A$ is a semisimple finite dimensional algebra, i.e. it is  a finite  direct product of simple finite dimensional algebras. For a semisimple  but not simple finite dimensional algebra, every simple module is unfaithful. Therefore, the algebra $A$ is a simple algebra. 
\end{proof}

 For an algebra $A$ and its subalgebra $B$, let $\CA (A, B)$ (resp., $\CA (A, B, sim)$) be the sets of all (resp., simple) subalgebras of $A$ that contain $B$. 

\begin{proposition}\label{A24Mar25}%\marginpar{A24Mar25}
(\cite{CDA-NormalMaxFld}) Let $K$ be a field,   $A\in \fCK$ and $L$ is a strongly maximal subfield of $A$. Then:

\begin{enumerate}

\item   $\CA ( E, L)=\CA ( E, L, sim)$.

\item The map 
$$\CF (L/K)\ra \CA ( E, L)=\CA (A, L, sim), \;\; M\mapsto C_A(M)$$  is a bijection with inverse $B\mapsto C_A(B)$.

\end{enumerate}
\end{proposition}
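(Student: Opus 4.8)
The plan is to deduce both statements from the Double Centralizer Theorem, with statement~1 as the only substantial ingredient. Everything rests on the equality $C_A(L)=L$: by Theorem~\ref{DCThm}(3), $[L:K]\cdot[C_A(L):K]=\dim_K(A)=\Deg(A)^2$, and $[L:K]=\Deg(A)$ since $L$ is strongly maximal, so $[C_A(L):K]=[L:K]$; as $L$ is commutative, $L\subseteq C_A(L)$, whence $C_A(L)=L$. Granting statement~1, statement~2 is then formal. If $M\in\CF(L/K)$ then $M$ is a simple subalgebra of $A$ and $M\subseteq L$ gives $C_A(M)\supseteq C_A(L)=L$, so $C_A(M)\in\CA(A,L)$; conversely, if $B\in\CA(A,L)$ then $L\subseteq B$ gives $C_A(B)\subseteq C_A(L)=L$, so $C_A(B)$ is a $K$-subalgebra of the field $L$, hence itself an intermediate field of $L/K$. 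The assignments $M\mapsto C_A(M)$ and $B\mapsto C_A(B)$ are mutually inverse because $C_AC_A(M)=M$ and $C_AC_A(B)=B$, both by Theorem~\ref{DCThm}(1) applied to the simple algebras $M$ and $B$; the second equality is precisely where statement~1 is used.

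It remains to prove statement~1: every $K$-subalgebra $B$ with $L\subseteq B\subseteq A$ is simple. The key observation is that $A$ is a \emph{simple} left module over $L\otimes_K A^{\rm o}$, where $(l\otimes a)\cdot x=lxa$. Indeed, this action is a $K$-algebra homomorphism $L\otimes_K A^{\rm o}\to\End_L({}_LA)$ into the endomorphism ring of $A$ regarded as a left $L$-vector space; since $[L:K]=\Deg(A)$ one has $\dim_L({}_LA)=\dim_K(A)/[L:K]=\Deg(A)$, so $\End_L({}_LA)\cong M_{\Deg(A)}(L)$ and both algebras have $K$-dimension $\Deg(A)^3$. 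As $L\otimes_K A^{\rm o}$ is central simple over $L$, this homomorphism is injective, hence (by equality of dimensions) an isomorphism, and under it ${}_LA$ is the faithful simple module of $M_{\Deg(A)}(L)$. Therefore $A$ has no sub-bimodule other than $0$ and $A$ for the pair (left $L$-action, right $A$-action).

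Now fix $B$ with $L\subseteq B\subseteq A$. For any two-sided ideal $I$ of $B$, the set $IA$ is a left $B$-submodule of $A$ (because $BI\subseteq I$) and a right ideal of $A$, hence one of the sub-bimodules of the previous paragraph; likewise $eA$ for any central idempotent $e$ of $B$. If $I$ is nilpotent, say $I^k=0$, then $I^kA=0$, so $IA=A$ would force $A=IA=I^2A=\dots=I^kA=0$; hence $IA=0$, and $I=I\cdot 1\subseteq IA=0$. Thus $B$ has no nonzero nilpotent ideal, so its Jacobson radical is zero and $B$ is semisimple. If $B$ were not simple it would contain a central idempotent $e\notin\{0,1\}$, and then $eA$ would be a nonzero sub-bimodule with $eA\ne A$ (from $eA=A$ one gets $1=ea$, so $e=e^2a=ea=1$) — impossible. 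Hence $B$ is simple, i.e. $\CA(A,L)=\CA(A,L,sim)$, and the proof is complete.

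I expect the main obstacle to be statement~1, and specifically the choice of the right module: it is by viewing $A$ as a module over $L\otimes_K A^{\rm o}$ that strong maximality of $L$ is converted into simplicity of this module, after which ideals and central idempotents of an intermediate algebra $B$ automatically produce sub-bimodules and are forced to be trivial. (Incidentally, the displayed statement writes $\CA(E,L)$ in two places where $\CA(A,L)$ is meant.)
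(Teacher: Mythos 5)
Your proof is correct, and statement 2 is handled exactly as in the paper (both directions reduce to $C_A(L)=L$ plus the Double Centralizer Theorem applied to the simple algebras $M$ and $B$). For statement 1, however, you take a genuinely different route. The paper's argument is: since $L\in\Max_s(E)$, the field $L$ is a faithful simple $E$-module, hence (because $L\subseteq B$, so $Bv\supseteq Lv=L$ for $v\neq 0$) a faithful simple $B$-module, and then a separate lemma (Lemma \ref{a5May25}: a finite-dimensional algebra with a faithful simple module is simple) finishes the proof. Your argument instead makes $A$ itself a simple module over $L\otimes_K A^{\rm o}$ via the dimension count $\dim_K(L\otimes_K A^{\rm o})=\Deg(A)^3=\dim_K\End_L({}_LA)$, and then rules out nonzero nilpotent ideals and nontrivial central idempotents of an intermediate algebra $B$ by producing $(L,A)$-sub-bimodules $IA$ and $eA$. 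The trade-off: the paper's proof is shorter, but as written it exploits the concrete action of $E=\End_K(L)$ on $L$ and so really addresses the case $A=E$ (the only case used later, in Theorem \ref{B24Mar25}); your bimodule argument works verbatim for an arbitrary $A\in\fCK$ with a strictly maximal subfield, which is the generality the proposition actually claims. Your observation about the typo $\CA(E,L)$ versus $\CA(A,L)$ in the statement is also correct.
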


\begin{proof} 1. We have to show that the algebra $B$ is a simple algebra. The inclusion $L\subseteq B$ implies the inclusion 
$$
C_E(B)\subseteq C_E (L)=L.
$$
 The last equality follows from the fact that $L\in \Max_s (E)$ (statement 1). Clearly, the field $L$ is a faithful simple $E$-module. Since $L\subseteq B$, the field $L$ is also a faithful simple $B$-module. By Lemma \ref{a5May25}, the finite dimensional algebra $B$ is a simple algebra.

2. Let us show that the map is a well-defined map. Since the field $M/K\in \CF (L/K)$ is a simple algebra, its centralizer $C_A(M)$ is also a simple subalgebra of $A$, by Theorem \ref{DCThm}.(2). Clearly, $L\subseteq C_A(M)$  (since $M\subseteq L$), and so $C_A(M)\in \CA:= \CA (A, L, sim)$. 

The field $L$ is a strongly maximal subfield of $A$. Hence, $C_A(L)=L$. For every algebra $B\in \CA$, $ L\subseteq B$. Hence, $C_A(B)\subseteq C_A(L)=L$, and so 
$$
C_A(B)\in \CF (L/K).
$$ 
By the Double Centralizer Theorem (Theorem \ref{DCThm}.(1)), $C_A(C_A(M))=M$   for all $M\in \CF (L/K)$ and $C_A(C_A(B))=B$ for all $B\in \CA$, and the result follows. 
\end{proof}

Theorem \ref{B24Mar25} is a  description of subalgebras of the endomorphism algebra $E$ that contain the field $L$. This is one of the key results of the paper.

\begin{theorem}\label{B24Mar25}%\marginpar{B24Mar25}
(\cite{CDA-NormalMaxFld})  Let 
 $L/K$ be a finite field extension of degree $n:=[L:K]<\infty$ and  $E:=E(L/K )\simeq M_n(K)$. Then:
 \begin{enumerate}
 
\item $L\in \Max_s (E)$.

\item $\CA ( E, L)=\CA ( E, L, sim)=\{ E(L/M)\, | \, M\in \CF (L/K) \}$.

\item  The map 
$$\CF (L/K)\ra \CA ( E, L)=\CA (E, L, sim), \;\; M\mapsto C_E(M)=E(L/M)$$  is a bijection with inverse $B\mapsto C_E(B)$. 

\end{enumerate}
\end{theorem}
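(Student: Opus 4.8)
The plan is to deduce Theorem \ref{B24Mar25} directly from Proposition \ref{A24Mar25} by taking $A = E = E(L/K)$ and verifying that $L$ is a strongly maximal subfield of $E$. First I would establish statement (1): since $E \simeq M_n(K)$ is a central simple $K$-algebra with $\Deg(E) = n$, and $L$ is a subfield of $E$ of $K$-dimension $[L:K] = n$, we have $[L:K] = \Deg(E)$, which is exactly the definition of strongly maximal. (One should note $L$ genuinely sits inside $E$ via the regular representation $l \mapsto (l \cdot)$, which is a $K$-algebra monomorphism as recalled in the introduction, and that $L$ is commutative hence a field, not just a subalgebra.) With (1) in hand, Proposition \ref{A24Mar25} applies verbatim and gives $\CA(E,L) = \CA(E,L,sim)$ together with the bijection $\CF(L/K) \to \CA(E,L)$, $M \mapsto C_E(M)$, with inverse $B \mapsto C_E(B)$. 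This already yields most of (2) and all of (3) except for the identification $C_E(M) = E(L/M)$.

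The remaining point — and the one genuine thing to prove here — is the formula $C_E(M) = E(L/M)$ for each intermediate field $M \in \CF(L/K)$. The claim is that an element $\varphi \in \End_K(L)$ commutes with the image of $M$ (acting by left multiplication) if and only if $\varphi$ is $M$-linear, i.e. $\varphi \in \End_M(L) = E(L/M)$. One direction is immediate: if $\varphi$ is $M$-linear then for $m \in M$ and $\lambda \in L$ we have $\varphi(m\lambda) = m\varphi(\lambda)$, which says $\varphi \circ (m\cdot) = (m\cdot) \circ \varphi$. Conversely, if $\varphi$ commutes with every $(m\cdot)$, $m \in M$, then $\varphi(m\lambda) = m\varphi(\lambda)$ for all $m,\lambda$, so in particular (taking $\lambda = 1$) $\varphi(m) = m\varphi(1)$ and more generally $\varphi$ is $M$-linear. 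Thus $C_E(M) = \End_M(L) = E(L/M)$ as $K$-algebras sitting inside $E$. This also requires the implicit remark that $E(L/M) = \End_M(L)$ is naturally a $K$-subalgebra of $E = \End_K(L)$ (every $M$-linear map is in particular $K$-linear) and that it contains $L$ (left multiplication by elements of $L$ is $M$-linear since $M \subseteq L$ is commutative).

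Putting these together: the set $\{C_E(M) : M \in \CF(L/K)\}$ equals $\{E(L/M) : M \in \CF(L/K)\}$ by the formula just proved, and by Proposition \ref{A24Mar25} this set is exactly $\CA(E,L) = \CA(E,L,sim)$, giving statement (2); and the bijection of statement (3) is the bijection of Proposition \ref{A24Mar25} with the target described explicitly via the same formula. I do not anticipate a real obstacle: the argument is essentially an unwinding of definitions plus one citation. The only place requiring mild care is making sure the various identifications ($L \hookrightarrow E$, $E(L/M) \hookrightarrow E$, $C_E(M)$ computed inside $E$) are all compatible, so that the equality $C_E(M) = E(L/M)$ is an equality of subalgebras of $E$ and not merely an abstract isomorphism — but this is exactly what the explicit computation above delivers.
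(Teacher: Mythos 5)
Your proposal is correct and follows essentially the same route as the paper: establish $L\in \Max_s(E)$ by the degree count $[L:K]=n=\Deg(E)$, invoke Proposition \ref{A24Mar25}, and identify $C_E(M)=E(L/M)$. The only difference is that you spell out the verification that commuting with left multiplication by $M$ is the same as $M$-linearity, which the paper asserts without proof; this is a welcome addition rather than a deviation.
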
  

\begin{proof} 1. Since $\Deg (E)=\Deg (M_n(K))=n=[L:K]$, we have that $L\in \Max_s (E)$. 

2. (i)  $\CA ( E, L)=\CA ( E, L, sim)$: Since $E\in \fC (K)$ and $L\in \Max_s (E)$, the statement (i) follows from Theorem \ref{A24Mar25}.(1).

(ii)  $\CA ( E, L, sim)=\{ C_E(M)=E(L/M)\, | \, M\in \CF (L/K) \}$: For all $M\in \CF (L/K)$, 
$C_E(M)=E(L/M)$. Now,  
$$
\CA ( E, L, sim)=\{ C_E(M)=E(L/M)\, | \, M\in \CF (L/K) \},
$$ 
by Proposition \ref{A24Mar25}.

3. Statement 3 follows from statement 2 and Proposition \ref{A24Mar25}.  
\end{proof}

\begin{corollary}\label{aB24Mar25}%\marginpar{aB24Mar25}
Let  
 $L/K$ be a finite field extension,  $B\in \CA ( E, L)$ and $M:=C_E(B)\in \CF (L/K)$ (Theorem  \ref{B24Mar25}.(3)). Then:
 \begin{enumerate}
 
 \item  $B=E(L/M)\simeq M_{[L:M]}(M)\in \fC (M)$.
 
\item The field $L$ is the only (up to isomorphism)  simple $B$-module.

\item $\End_B(L)=M$.

\end{enumerate}
\end{corollary}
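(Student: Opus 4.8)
The plan is to derive all three statements from Theorem~\ref{B24Mar25} by unwinding the centralizer and the module theory of the matrix algebra $E(L/M)$, so that no new ideas are needed. For statement~1, I would invoke the bijection of Theorem~\ref{B24Mar25}.(3): since $B\in\CA(E,L)$ and $M=C_E(B)\in\CF(L/K)$, applying its inverse gives
$$
B=C_E(C_E(B))=C_E(M)=E(L/M),
$$
where the last equality is the identity $C_E(M)=E(L/M)$ already used in the proof of Theorem~\ref{B24Mar25}.(2) (a $K$-endomorphism of $L$ commutes with all left multiplications by elements of $M$ exactly when it is $M$-linear). Fixing an $M$-basis of $L$ identifies $E(L/M)=\End_M(L)$ with $M_{[L:M]}(M)$, and the matrix algebra over the field $M$ is central simple over $M$; hence $B=E(L/M)\simeq M_{[L:M]}(M)\in\fC(M)$.

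For statement~2, I would use that $B\simeq M_{[L:M]}(M)$ is a finite-dimensional simple algebra, hence simple Artinian, and therefore has a unique simple module up to isomorphism; it then suffices to show that $L$ is such a module. Regarding $B=\End_M(L)$, the field $L$ is a $B$-module on which $M\subseteq L\subseteq B$ acts by scalar multiplication. If $N\subseteq L$ is a nonzero $B$-submodule and $0\neq v\in N$, then, extending $v$ to an $M$-basis of $L$, for every $w\in L$ there is $\varphi\in\End_M(L)=B$ with $\varphi(v)=w$; thus $w\in N$, so $N=L$. Hence $L$ is a simple (and faithful) $B$-module, and by uniqueness it is the only one, in agreement with Lemma~\ref{a5May25}.

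For statement~3, I would observe that $K\subseteq L\subseteq B$, so any $B$-linear endomorphism of $L$ is in particular $K$-linear, giving
$$
\End_B(L)=\{\varphi\in E\mid \varphi b=b\varphi\ \text{for all}\ b\in B\}=C_E(B)=M.
$$
The argument is essentially bookkeeping on top of Theorem~\ref{B24Mar25}; the only points requiring a little care are the two symmetric identifications $C_E(M)=E(L/M)$ and $\End_B(L)=C_E(B)$, both resting on the elementary fact that an $M$-linear endomorphism of $L$ is the same thing as a $K$-linear one commuting with multiplication by $M$. I do not anticipate a genuine obstacle: once Theorem~\ref{B24Mar25} and the Wedderburn structure of $M_{[L:M]}(M)$ are available, the corollary follows by unwinding definitions.
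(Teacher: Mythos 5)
Your proposal is correct and follows essentially the same route as the paper: statement 1 is read off from the bijection of Theorem \ref{B24Mar25}.(3) together with the identification $C_E(M)=E(L/M)\simeq M_{[L:M]}(M)$, and statements 2 and 3 are then deduced from the Wedderburn structure of this matrix algebra (the paper simply says ``2 and 3 follow from statement 1,'' and your expansion of that step — uniqueness of the simple module over a simple Artinian algebra, and $\End_B(L)=C_E(B)=M$ — is exactly the intended bookkeeping).
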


\begin{proof} 1.  By Theorem  \ref{B24Mar25}.(3), $B=E(L/M)\simeq M_{l:M]}(M)\in \fC (M)$.

2 and 3. Statements 2  and 3 follow from statement 1. 
\end{proof}

{\bf The subalgebra $L\rtimes G(L/K)$ of the endomorphism algebra $E$.}  Let $L/K$ be a finite field extension. Then a subalgebra of $E$, which is generated by the field $L$ and the automorphism group $G$,  is equal to the sum $\sum_{g\in G}Lg$  since $gl=g(l)g$ for all elements $l\in L$  and $g\in G$.

 \begin{theorem}\label{BC24Mar25}%\marginpar{BC24Mar25}
Let $L/K$ be a finite field extension. Then:

\begin{enumerate}

\item  The field extension $L/L^{G}$ is a Galois field extension with Galois group $G(L/L^{G})=G$.

\item $\sum_{g\in G}Lg=\bigoplus_{g\in G}Lg=L\rtimes G=E(L/L^{G})\in \fC (L^{G})$, 
$$
\Deg (L\rtimes G)=[L:L^{G}]=|G(L/L^{G})|=|G|
$$ 
and 
$\dim_K(L\rtimes G)=[L:L^{G}]^2[L^{G}:K]=[L:K][L:L^{G}]$.

\item $L\in \Max_s(L\rtimes G)=\Max_s(E(L/L^{G}))$.

\item $L$ is the only (up to isomorphism) simple 
 $L\rtimes G$-module. 

\item $\End_{L\rtimes G}(L)=L^{G}$. 

\item The algebra $L\rtimes G=\bigoplus_{g\in G}Lg$ is a direct sum of  non-isomorphic simple $L$-bimodules. 

\end{enumerate}
\end{theorem}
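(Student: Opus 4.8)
The plan is to establish the six statements essentially in the order given, since each one feeds the next, and to lean throughout on the fact that $L/L^G$ lands inside the central-simple-algebra machinery of Theorems \ref{A24Mar25}--\ref{B24Mar25} and Corollary \ref{aB24Mar25}. First I would prove statement (1): the field $L^G$ is by construction the fixed field of the finite group $G$ acting faithfully on $L$, and Artin's lemma (which here I would phrase purely in terms of linear independence of characters, not invoking separability or normality as named concepts) gives $[L:L^G]=|G|$ together with the equality $G(L/L^G)=G$; so $L/L^G$ is a G-extension (equivalently, Galois). Then statement (2) is the heart of the computation: since $gl=g(l)g$, the subalgebra of $E$ generated by $L$ and $G$ is $\sum_{g\in G}Lg$, and the Dedekind independence of the automorphisms $g\in G$ over $L$ shows the sum is direct, giving $L\rtimes G=\bigoplus_{g\in G}Lg$ with the twisted multiplication. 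To identify this with $E(L/L^G)$, I would check that $L\rtimes G$ acts faithfully on $L$ (the $L$-component already acts faithfully) and that $L$ is a simple $L\rtimes G$-module, so by Lemma \ref{a5May25} the algebra $L\rtimes G$ is simple; then a dimension count — $\dim_{L^G}(L\rtimes G)=|G|\cdot[L:L^G]=[L:L^G]^2$ — forces $L\rtimes G\simeq M_{[L:L^G]}(L^G)=E(L/L^G)$ once one notes that $L^G$ is exactly the centralizer, which will be verified in statement (5). The claimed formulas for $\Deg$ and $\dim_K$ then follow by bookkeeping using $[L:K]=[L:L^G][L^G:K]$.

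Statements (3)--(5) I would then deduce cheaply. For (3): $\Deg(L\rtimes G)=[L:L^G]$ from (2), and $[L:L^G]$ is the degree of the subfield $L$ inside $L\rtimes G\simeq M_{[L:L^G]}(L^G)$, so $L$ is strictly maximal. For (4) and (5): these are Corollary \ref{aB24Mar25}.(2,3) applied to the central simple algebra $E(L/L^G)$ with the subfield $L$ and $M=L^G$; concretely, $L$ is the unique simple module because $L\rtimes G\simeq M_{[L:L^G]}(L^G)$, and $\End_{L\rtimes G}(L)=C_E(\,L\text{-action plus }G\text{-action})=L^G$ since an $L$-linear, $G$-equivariant endomorphism of $L$ is multiplication by a $G$-fixed scalar. (In fact I should prove $\End_{L\rtimes G}(L)=L^G$ before invoking it in the dimension argument of (2), or else argue (2) and (5) together; I would restructure slightly so the centralizer computation comes first.)

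Finally, statement (6): each summand $Lg$ is an $(L,L)$-sub-bimodule of $L\rtimes G$ with left action the usual one and right action twisted by $g$, i.e. $l'\cdot(xg)\cdot l=l'xg(l)\,g$; as an $L$-bimodule $Lg$ is the "twist" ${}_1L_g$, which is simple because $L$ is a field, and two such twists ${}_1L_g$ and ${}_1L_h$ are isomorphic as bimodules iff $g=h$ (an isomorphism would be left-$L$-linear, hence multiplication by a scalar, and right-linearity then forces $g=h$ on all of $L$). Hence $L\rtimes G=\bigoplus_{g\in G}Lg$ is a direct sum of pairwise non-isomorphic simple $L$-bimodules.

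The main obstacle I anticipate is not any single deep step but the bootstrapping dependency: the identification $L\rtimes G=E(L/L^G)$ in (2) needs simplicity (via Lemma \ref{a5May25}) and the centralizer fact $C_{L\rtimes G}(L)=L^G$ (essentially (5)), so the clean way is to prove the character-independence directness, the $L^G$-centralizer computation, and the simplicity all up front, and only then let the dimension count nail the matrix-algebra isomorphism; everything else is routine.
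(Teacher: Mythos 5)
Your proposal is correct and its overall architecture matches the paper's: Dedekind independence of the automorphisms $g\in G$ for the directness of $\sum_{g\in G}Lg$, Artin's lemma (cited by the paper simply as classical Galois theory) for part (1), a dimension count over $L^{G}$ to identify $L\rtimes G$ with $E(L/L^{G})$, and the same explicit bimodule computation for part (6). The one genuine divergence is the proof that $L\rtimes G$ is simple: the paper argues directly inside the skew group algebra, taking a nonzero element of minimal support in a nonzero ideal, translating so that the identity occurs in the support, and then shrinking the support by taking the commutator with an $l\in L$ satisfying $g_2(l)\neq l$; you instead observe that $L$ is a faithful simple $L\rtimes G$-module and invoke Lemma \ref{a5May25}. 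Both work; the paper's argument is self-contained, while yours is shorter and reuses a lemma already deployed in the proof of Proposition \ref{A24Mar25}. One remark on the restructuring you worry about: the circularity between (2) and (5) is illusory. You do not need $\End_{L\rtimes G}(L)=L^{G}$ to prove (2); the containment $L\rtimes G\subseteq E(L/L^{G})=\End_{L^{G}}(L)$ is immediate (every $g\in G$ and every multiplication by an element of $L$ is $L^{G}$-linear since $g(\mu l)=\mu g(l)$ for $\mu\in L^{G}$), and together with $\dim_{L^{G}}(L\rtimes G)=|G|\cdot [L:L^{G}]=[L:L^{G}]^2=\dim_{L^{G}}(E(L/L^{G}))$ it yields the equality $L\rtimes G=E(L/L^{G})$ on the nose --- which is what the theorem asserts, rather than a mere abstract isomorphism with $M_{[L:L^{G}]}(L^{G})$. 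This is exactly the paper's route in step (iii) of its proof of part (2); statements (4) and (5) then fall out afterwards, and with this ordering the simplicity of $L\rtimes G$ becomes a consequence of the equality rather than a prerequisite for it.
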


\begin{proof} 1. By the Galois Theory, the field extension $L/L^G$ is a Galois field extension with Galois group $G(L/L^G)=G$.

2. Let $A:= \sum_{g\in G}Lg=\sum_{g\in G(L/L^G)}Lg$
 and $G=G(L/K)=G(L/L^G)$.
 
 (i) $A=\bigoplus_{g\in G}Lg=L\rtimes G$: 
 The automorphisms $g\in G$ are distinct (by the definition). Hence, they are $L$-linearly independent 
 as elements of the left $L$-vector space $E$, and so  $A=\bigoplus_{g\in G}Lg=L\rtimes G$.

(ii)  {\em The algebra $A$ is a simple algebra that contains the field $L$}: Clearly, $L\subseteq A$. So, it remains to prove simplicity of the algebra $A$. 
 For an element $a=\sum_{g\in G}l_g g\in A$, where $l_g\in L$, the set 
$$
\supp (a):=\{ g\in G\, | \, l_g\neq 0\}
$$
is called the {\em support} of the element $a$ and $|\supp (a)|$ is the number of elements in $\supp (a)$. Let $I$ be a nonzero ideal of the algebra $A$ and
$$ m:=\min \{ |\supp (a)| \, | \, 0\neq a\in I\}.$$It suffices to show that $m=1$ since then there is a nonzero element $l_gg\in I$, and so $g=l_g^{-1}l_gg\in I$ and $I=A$ since the element $g$ is  a unit of the algebra $A$ and its inverse $g^{-1}=g^{n-1}$ belongs to the ideal $I$ where $n$ is the order of the automorphism $g\in G$ (since $G$ is  a finite group, the order of  the automorphism $g$ is also finite). 

Suppose that $m\geq 2$ and $a=l_1g_1+\cdots +l_mg_m\in I$ where $l_1, \ldots , l_m\in L\backslash \{ 0\}$ and $g_1, \ldots , g_m\in G$ are distinct automorphisms. We seek a contradiction.  Then
$
ag_1^{-1}=l_1+l_2g_2g_1^{-1}+\cdots + l_mg_mg_1^{-1}\in I$. So, we may assume that $g_1=e$, i.e.
$$a=l_1+l_2g_2+\cdots +l_mg_m\in I.$$
Since $g_2\neq e$, we can find  an element, say $l\in L$, such that $g_2(l)\neq l$. Then,
$$
b:=[a,l]=l_2(g_2(l)-l)g_2+\cdots + l_m(g_m(l)-l)g_m\in I\backslash \{ 0\}\;\; {\rm and}\;\; |\supp (b)|\leq m-1<m,
$$ a contradiction.

(iii) $A=E(L/L^G)\in \fC (L^G)$: Notice that $A\subseteq E(l/L^G)$ and 
$$
\dim_{L^G}(A)=[L:L^G]^2=\dim_{L^G}(E(L/L^G)).
$$ Therefore, $A=E(L/L^G)\in \fC (L^G)$, and so 
$$
\Deg (A)=[L:L^G]=|G(L/L^G)|=|G|
$$ 
and 
$\dim_K(A)=[L:L^G]^2[L^G:K]=[L:K][L:L^G]$.

3.  Statement 3 follows from statement 1 (since $\Deg (A)=[L:L^G]$). 

4. The field $L$ is a simple $A=E(L/L^G)$-module (since $L\subseteq A$). Therefore, the field $L$ is the only (up to isomorphism)  simple $A=E(L/L^G)$-module.

5.  Statement 5 follows from the equality $A=E(L/L^G)$.

6. For all elements $l,l'\in L$ and $g\in G$, 
$$ lgl'=lg(l')g.$$ Clearly, for each element $g\in G$, the $L$-bimodule $Lg$ is simple. Suppose that $f: Lg\ra Lg'$ is a nonzero $L$-bimodule homomorphism. Then $f(g)=\l g'$ for some nonzero element $\l \in L$, and so 
$$g(l)\l g'=f(g(l)g)=f(gl)=f(g)l=\l g' l=\l g'(l)g'.$$
Hence, $g(l)=g'(l)$ for all elements $l\in L$. This means that $g=g'$  and statement 6 follows.
\end{proof}

 Each unit $u$  of an algebra $A$, determines the {\bf inner automorphism} $\o_u$ of the algebra $A$ which is given  by the rule: For all elements $a\in A$, $\o_u (a):=uau^{-1}$. Elements $g$ of the group $G$ are units of the algebra $L\rtimes G$ and 
$\o_g(lg')=g(l)gg'g^{-1}$ for all elements $g,g'\in G$ and $l\in L$. Therefore the map
$$
G\ra \Aut_K(L\rtimes G), \;\; g\mapsto \o_g
$$
is a group monomorphism. 

Let $ \CA (L\rtimes G, L)$ be the set of subalgebras of the algebra $L\rtimes G$ that contain the field $L$. A subalgebra $A$ of $L\rtimes G$ is called  $G{\bf-stable}$ if 
$$
\o_g (A)=A\;\; {\rm  for\; all\; elements}\;\;g\in G.
$$ 
Let $ \CA (L\rtimes G, L, G)$ be the set of $G$-stable subalgebras of the algebra $L\rtimes G$ that contain the field $L$. Corollary \ref{a24Mar25}  describes the sets  $\CA (L\rtimes G, L)$  and $\CA (L\rtimes G, L,  G)$.

 \begin{corollary}\label{a24Mar25}%\marginpar{a24Mar25}
Let $L/K$ be a finite field extension. Then:
\begin{enumerate}

\item  $ \CA (L\rtimes G, L)=\{L\rtimes H\, | \, H$ is a  subgroup of $G \}$.

\item  $ \CA (L\rtimes G,L,  G)=\{L\rtimes N\, | \, N$ is a normal subgroup of $G \}$.

\end{enumerate}
\end{corollary}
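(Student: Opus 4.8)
The plan is to derive both statements from a single structural fact: \emph{every $L$-sub-bimodule $N$ of $L\rtimes G=\bigoplus_{g\in G}Lg$ equals $\bigoplus_{g\in S}Lg$ for some subset $S\subseteq G$}, namely for $S$ the set of those $g\in G$ that occur in the support of some element of $N$. This is where the actual work lies, and it is proved by the support-minimization device used in the proof of Theorem \ref{BC24Mar25}.(2). Given such a $g$, choose $a=\sum_{h}l_hh\in N$ with $g\in\supp(a)$ and $|\supp(a)|$ minimal among all elements of $N$ whose support contains $g$. If $\supp(a)\neq\{g\}$, pick $h\in\supp(a)\setminus\{g\}$ and, using that the distinct automorphisms $g\neq h$ disagree somewhere, an element $\mu\in L$ with $h(\mu)\neq g(\mu)$; then $a\mu-h(\mu)a=\sum_k l_k\bigl(k(\mu)-h(\mu)\bigr)k$ lies in $N$ (as $N$ is an $L$-sub-bimodule), has vanishing $h$-coefficient and nonzero $g$-coefficient $l_g(g(\mu)-h(\mu))$, contradicting minimality. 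Hence $l_gg\in N$ with $l_g\neq0$, so $g=l_g^{-1}(l_gg)\in N$ and $Lg\subseteq N$; this gives $\bigoplus_{g\in S}Lg\subseteq N$, and the reverse inclusion is clear. (Alternatively one may invoke the general fact that a submodule of a finite direct sum of pairwise non-isomorphic simple modules is a direct sum of a subset of the summands, applied to the decomposition in Theorem \ref{BC24Mar25}.(6).)

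For statement 1, I would note that any $A\in\CA(L\rtimes G,L)$ is in particular an $L$-sub-bimodule of $L\rtimes G$, so by the lemma $A=\bigoplus_{g\in S}Lg$ with $S=\{g\in G\mid g\in A\}=A\cap G$. The set $H:=A\cap G$ is a subgroup of $G$: it contains $\id_L$, it is closed under products since $A$ is a subalgebra, and if $g\in H$ has (finite) order $m$ then $g^{-1}=g^{m-1}\in A$, whence $g^{-1}\in H$. Thus $A=\bigoplus_{h\in H}Lh=L\rtimes H$. Conversely, for any subgroup $H\leq G$ the subspace $L\rtimes H=\bigoplus_{h\in H}Lh$ contains $L$ and is closed under the product $\lambda h\cdot\mu h'=\lambda h(\mu)hh'$ (because $hh'\in H$), hence is a subalgebra of $L\rtimes G$ containing $L$. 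This establishes statement 1.

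For statement 2, I would compute the inner automorphisms $\o_g$ on these subalgebras. From $\o_g(\lambda h)=g(\lambda)\,ghg^{-1}$ and $g(L)=L$ (as $g$ is a $K$-algebra automorphism of $L$) we get $\o_g(L\rtimes H)=\bigoplus_{h\in H}L\,(ghg^{-1})=L\rtimes(gHg^{-1})$. By statement 1 the map $H\mapsto L\rtimes H$ is injective, since $H$ is recovered as $(L\rtimes H)\cap G$; hence $\o_g(L\rtimes H)=L\rtimes H$ for all $g\in G$ if and only if $gHg^{-1}=H$ for all $g\in G$, i.e. if and only if $H$ is normal in $G$. Combined with statement 1 this gives the asserted description of $\CA(L\rtimes G,L,G)$.

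The main obstacle is the structural lemma of the first paragraph; once it is available, both parts reduce to routine bookkeeping. The only delicate point in the lemma is ensuring that the support-reducing element $a\mu-h(\mu)a$ simultaneously eliminates the chosen index $h$ and keeps $g$ in its support — precisely what the choice of $\mu$ with $h(\mu)\neq g(\mu)$ secures.
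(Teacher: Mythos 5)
Your proposal is correct and takes essentially the same route as the paper: both arguments reduce everything to the fact that an $L$-sub-bimodule of $L\rtimes G=\bigoplus_{g\in G}Lg$ is of the form $\bigoplus_{g\in S}Lg$, then identify $S=A\cap G$ as a subgroup (for statement 1) and translate $G$-stability of $L\rtimes H$ into normality of $H$ via $\o_g(Lh)=L(ghg^{-1})$ (for statement 2). The only difference is that you prove the bimodule lemma directly by the support-minimization device, whereas the paper cites the multiplicity-free decomposition of Theorem \ref{BC24Mar25}.(6) --- an alternative you yourself note.
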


\begin{proof} 1. Suppose that  $A\in  \CA (L\rtimes G, L)$. Since $L\subseteq A$, the algebra $A$ is an $L$-bimodule. By Theorem \ref{BC24Mar25}.(6), the algebra $L\rtimes G=\bigoplus_{g\in G}Lg$ is a direct sum of  non-isomorphic simple $L$-bimodules. Therefore, 
$$
A=\bigoplus_{g\in H}Lg
$$ 
is a direct sum of  non-isomorphic simple $L$-bimodules for some subset $H$ of $G$ that contains the identity element $e$ of the group $G$. Since $A$ is an algebra and $LgLh=Lgh$ for all elements $g,h\in G$, the set $H$ is a submonoid of the {\em finite} group $G$. Therefore, the set $H$ is a subgroup of $G$, and statement 1 follows.

2. Suppose that  $A\in  \CA (L\rtimes G, L, G)$. By statement 1, $A=\bigoplus_{g\in N}Lg
$ for some subgroup $N$ of the group $G$. Since 
$gLng^{-1}=Lgng^{-1}$ for all elements $n\in N$ and $g\in G$, the  algebra $A$ belongs to the set $\CA (L\rtimes G, L, G)$ iff $gng^{-1}\in N$ for all elements $n\in N$ and $g\in G$ iff $N$ is a normal subgroup of $G$. 
\end{proof}

 %%%%%%%%%%%%%%%%%%%%%   Section 3   %%%%%%%%%%%%

  \section{The Galois Theory from ring theoretic pooint of view} \label{AN-GT-NORMAL} %\marginpar{AN-GT-NORMAL}

The aim of the section is to prove  Theorem \ref{1Jun25} and  Theorem \ref{11Jun25}.  
 For a G-extension $L/K$, Theorem \ref{1Jun25}.(1) describes the set $\CA (E, L)$, Theorem \ref{1Jun25}.(2) gives the  Galois-type  Correspondence between $K$-subfields of $L$ and the algebras in $\CA (E, L)$ and as the result 
 Theorem \ref{1Jun25}.(3) gives the  Galois Correspondence for $K$-subfields of $L$. 
  Similarly, for   a G-extension $L/K$,  Theorem \ref{11Jun25}.(1) describes the set $\CA (E, L, G)$, Theorem \ref{11Jun25}.(2) gives the  Galois-type  Correspondence between Galois $K$-subfields of $L$ and the algebras in $\CA (E, L, G)$ and as the result  Theorem \ref{11Jun25}.(3) gives the  Galois Correspondence for Galois $K$-subfields of $L$.\\

 Proposition \ref{A8May25} shows that the classes of B-extensions and Galois field extensions coincide.
 
\begin{proposition}\label{A8May25}%\marginpar{A8May25}
A finite field extension $L/K$ is  a Galois field extension  iff $|G|=[L:K]$ iff  $L/K$ is a G-extension.
\end{proposition}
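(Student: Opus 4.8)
The plan is to reduce the whole statement to a single $K$-dimension count inside the central simple algebra $E=E(L/K)\simeq M_n(K)$, where $n=[L:K]$, using the structure of $L\rtimes G$ already recorded in Theorem \ref{BC24Mar25}.

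First I would recall from Theorem \ref{BC24Mar25} that $L\rtimes G=\bigoplus_{g\in G}Lg$ is a $K$-subalgebra of $E$ with $|G|=[L:L^{G}]$ and $\dim_K(L\rtimes G)=[L:K]\,[L:L^{G}]$. Since $\dim_K(E)=[L:K]^2$ and $L\rtimes G\subseteq E$ are both finite-dimensional $K$-algebras, the equality defining a G-extension is detected purely by matching dimensions:
$$
L/K \text{ is a G-extension}\iff E=L\rtimes G\iff \dim_K(L\rtimes G)=\dim_K(E)\iff [L:K]\,[L:L^{G}]=[L:K]^2\iff [L:L^{G}]=[L:K].
$$
Because $K\subseteq L^{G}\subseteq L$ and $[L:K]=[L:L^{G}]\,[L^{G}:K]$, the last equality holds iff $[L^{G}:K]=1$, i.e.\ iff $L^{G}=K$.

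Next, for the middle condition, I would use $|G|=[L:L^{G}]$ (valid for every finite field extension, by Theorem \ref{BC24Mar25}): thus $|G|=[L:K]\iff [L:L^{G}]=[L:K]\iff L^{G}=K$, by the same chain as above. Finally, $L^{G}=K$ is exactly Artin's characterization of $L/K$ being a Galois field extension (equivalently: a finite extension with $[L:K]=|\Aut_{K\text{-alg}}(L)|$ is precisely a Galois extension). Combining the three equivalences — (G-extension) $\iff L^{G}=K$, \ ($|G|=[L:K]$) $\iff L^{G}=K$, and (Galois) $\iff L^{G}=K$ — gives the Proposition.

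The only classical input is Artin's identity $[L:L^{G}]=|G|$ for a finite group $G$ of automorphisms of a field $L$, which is already built into Theorem \ref{BC24Mar25}, together with the fixed-field description of Galois extensions. There is no genuine obstacle here; the one point that needs care is simply to observe that $L\rtimes G$ sits inside $E$ as a subalgebra of the stated $K$-dimension, so that the algebra equality $E=L\rtimes G$ is equivalent to equality of $K$-dimensions and hence to the numerical condition $[L:L^{G}]=[L:K]$.
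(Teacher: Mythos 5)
Your proposal is correct and follows essentially the same route as the paper: both arguments hinge on the dimension count $\dim_K(L\rtimes G)=[L:K]\,[L:L^{G}]=[L:K]\,|G|$ from Theorem \ref{BC24Mar25} versus $\dim_K(E)=[L:K]^2$, so that the inclusion $L\rtimes G\subseteq E$ is an equality iff $|G|=[L:K]$, combined with the classical characterization of Galois extensions by this numerical condition. Your detour through the fixed field $L^{G}=K$ is only a cosmetic rephrasing of the paper's direct chain of equivalences.
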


\begin{proof}
A finite field extension $L/K$ is a Galois field extension iff $|G|=[L:K]$ iff 
$$
\dim_K(L\rtimes G)=[L:K]^2\;\; (=\dim_K(E)
$$ 
iff 
 $L\rtimes G=E$ since $L\rtimes G\subseteq E$  iff  the field extension $L/K$ is a G-extension.
\end{proof}

 {\bf The Galois Correspondence for subfields.}  
\begin{definition} 
For a finite field extension $L/K$ and its intermediate subfield $K\subseteq M\subseteq L$, let
$$
G^M:= \{ g\in G\, | \, g(m)=m\;\; {\rm for\; all}\;\; m\in M \}=C_{G}(M).
$$
\end{definition} 
 By the definitions, and $G^M$ is a subgroup of the group $G$.

\begin{definition}
For a finite field extension $L/K$ and an algebra $A\in  \CA (E,L)$,  let
$$
 L^{A\cap G}:= \{  l \in L\, | \, g(l)=l\;\; {\rm for\; all}\;\; g\in A\cap G\}=C_{L}( A\cap G (L/K)).
 $$
\end{definition} 
 By the definition, the set $L^{A\cap G}$ is a subfield of $L$ that contain the field $K$ and the intersection $A\cap G$ is a subgroup of $G$.

\begin{lemma}\label{bc1Jun25}%\marginpar{bc1Jun25}
Let $L/K$ be a finite field extension and $M\in \CF (L/K)$. Then $G(L/M)=G^M$. 
\end{lemma}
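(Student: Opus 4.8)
*Let $L/K$ be a finite field extension and $M\in \CF (L/K)$. Then $G(L/M)=G^M$.*

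The plan is to show that both sides describe the same subset of $G=G(L/K)$, namely those automorphisms of $L$ over $K$ that fix $M$ pointwise, by carefully unwinding the two definitions and checking that the domains of the automorphisms under consideration coincide. First I would recall that $G(L/M)$ is, by the standing notation of the paper, the automorphism group of the $M$-algebra $L$; since $L/K$ is finite and $K\subseteq M\subseteq L$, the extension $L/M$ is finite, so $G(L/M)$ consists precisely of those $K$-algebra automorphisms $\sigma$ of $L$ with $\sigma|_M=\id_M$ — here one uses that any $M$-algebra automorphism of $L$ is in particular a $K$-algebra automorphism (as $K\subseteq M$), hence lies in $G=G(L/K)$, and conversely a $\sigma\in G$ fixing $M$ pointwise is $M$-linear, hence an element of $G(L/M)$. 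This gives the inclusion $G(L/M)\subseteq G$ together with the identification $G(L/M)=\{\sigma\in G\mid \sigma(m)=m\text{ for all }m\in M\}$.

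On the other hand, $G^M$ is defined in the preceding Definition as $\{g\in G\mid g(m)=m\text{ for all }m\in M\}=C_G(M)$, where the centralizer is taken inside the ambient endomorphism algebra $E=E(L/K)$ in which $G$ and $L$ both sit. The only genuine point to verify is that, for $g\in G$, the two conditions "$g(m)=m$ for all $m\in M$'' (viewing $g$ as an automorphism of $L$) and "$g$ commutes with every element of $M$'' (viewing $M\subseteq L\subseteq E$ via the regular representation $m\mapsto (m\cdot)$) are equivalent; this is immediate since $g(m\cdot)g^{-1}=(g(m)\cdot)$ as operators on $L$, so $g$ centralizes $m\cdot$ iff $g(m)=m$. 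Putting the two descriptions together yields $G(L/M)=G^M$.

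I do not expect any serious obstacle here: the lemma is essentially a bookkeeping statement reconciling the classical notation $G(L/M)$ with the centralizer notation $G^M=C_G(M)$ introduced for the ring-theoretic approach. The only thing requiring a word of care is the compatibility of the action of $g\in G$ on $L$ with the conjugation action of $g$ on the copy of $L$ inside $E$, i.e. the identity $g\,(m\cdot)\,g^{-1}=(g(m)\cdot)$, which follows directly from the multiplication rule $\lambda g\cdot\mu h=\lambda\,g(\mu)\,gh$ recorded in the introduction. Once that identity is in hand, the proof is a one-line double inclusion.
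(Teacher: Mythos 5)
Your proof is correct and, if anything, more careful than the paper's. Both arguments ultimately rest on the same two observations: every $M$-algebra automorphism of $L$ is in particular a $K$-algebra automorphism (giving the inclusion $G(L/M)\subseteq G$), and membership in either side of the asserted equality is characterized by fixing $M$ pointwise. Your extra check that ``fixes $M$ pointwise'' agrees with ``centralizes the copy of $M$ inside $E$'', via the operator identity $g\,(m\cdot)\,g^{-1}=(g(m)\cdot)$, is worth making explicit, since the paper's definition of $G^M$ silently identifies the two conditions when it writes $G^M=C_G(M)$. The one genuine difference is that the paper's one-line proof appeals to ``the normality of the field extensions $L/K$ and $L/M$'' --- a hypothesis that appears nowhere in the lemma's statement and, as your argument demonstrates, is not needed: the equality $G(L/M)=G^M$ is purely a matter of unwinding definitions and holds for an arbitrary finite extension and arbitrary intermediate field $M$. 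Your route is therefore preferable: it proves the lemma under exactly its stated hypotheses and stays within the paper's declared programme of never invoking normality.
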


\begin{proof}  The equality  follows from the normality of the field extensions $L/K$ and $L/M$ and the inclusion $G(L/M)\subseteq G$. 
\end{proof}

For a G-extension $L/M$, Lemma \ref{ac1Jun25} shows that all the fields $L/M$, where $M\in \CF (L/K)$, are  G-extensions (using only the definition of a G-extension).

\begin{lemma}\label{ac1Jun25}%\marginpar{ac1Jun25}
Let $L/K$ be a G-extension. Then for all fields $M\in \CF (L/K)$, $L/M$ is a G-extension. 
\end{lemma}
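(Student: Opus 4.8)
The plan is to reduce the statement to a dimension count, exactly as in the proof of Proposition \ref{A8May25}. Recall that $L/K$ is a G-extension means $E(L/K) = L \rtimes G(L/K)$, and by Proposition \ref{A8May25} this is equivalent to $|G(L/K)| = [L:K]$; in other words, $L/K$ is a G-extension iff it is a Galois field extension. So the first move is to translate the hypothesis: $L/K$ being a G-extension is the same as $L/K$ being Galois.

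Next I would invoke the classical (or structural) fact that if $L/K$ is Galois and $K \subseteq M \subseteq L$ is any intermediate field, then $L/M$ is again Galois. In the spirit of this paper — which wants to avoid the words `separable' and `normal' — the cleaner route is purely through degrees and the algebra $L \rtimes G(L/M)$. Concretely: by Theorem \ref{BC24Mar25}.(1), for the finite extension $L/M$ the extension $L/L^{G(L/M)}$ is Galois with group $G(L/M)$, and by Theorem \ref{BC24Mar25}.(2) we have $\dim_M(L \rtimes G(L/M)) = [L:L^{G(L/M)}]^2 \cdot [L^{G(L/M)} : M]$ while $\dim_M(E(L/M)) = [L:M]^2$. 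Thus $L/M$ is a G-extension iff $|G(L/M)| = [L:M]$ iff $L^{G(L/M)} = M$. So the task is to show $L^{G(L/M)} = M$, i.e. that every element of $L$ fixed by all $M$-automorphisms of $L$ already lies in $M$.

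For this I would use the hypothesis on $L/K$. Since $L/K$ is Galois, $L^{G(L/K)} = K$ and $|G(L/K)| = [L:K]$. Set $H := G(L/M) \subseteq G(L/K)$ and $M' := L^{H}$; clearly $M \subseteq M' \subseteq L$. Applying Theorem \ref{BC24Mar25} to $L/M'$: $L/M'$ is Galois with group $G(L/M') = H = G(L/M)$ (the latter equality because $H$ fixes $M' \supseteq M$ pointwise, and $G(L/M) \subseteq G(L/M') = H$ is automatic, while $H \subseteq G(L/M)$ by definition of $H$). Hence $[L:M'] = |H| = |G(L/M)| \le [L:M]$ since $G(L/M)$ embeds in the set of $K$-embeddings... wait — the bound I actually need is $|G(L/M)| \ge [L:M]$? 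No: the inequality $[L:M'] \le [L:M]$ together with $M \subseteq M'$ forces $M = M'$, i.e. $M = L^{G(L/M)}$, which is exactly what we want. The inequality $|G(L/M)| \le [L:M]$ holds for any finite extension (the group of $M$-automorphisms of $L$ has order at most $[L:M]$), so $[L:M'] = |G(L/M)| \le [L:M]$; combined with $M \subseteq M'$ and hence $[L:M'] \ge$ ... no, $M \subseteq M'$ gives $[L:M'] \le [L:M]$ again, not a contradiction. The correct leverage comes from the other side: since $L/K$ is Galois, $|G(L/K)| = [L:K]$, and I must derive $|G(L/M)| = [L:M]$ from this — this is where the real content sits.

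The main obstacle, then, is precisely showing $|G(L/M)| = [L:M]$ given $|G(L/K)| = [L:K]$, without re-deriving the separable/normal machinery. I expect the intended argument to go through the algebra side: $E(L/K) = L \rtimes G(L/K)$ by hypothesis, and one restricts this equality. Since $M \subseteq L \subseteq E(L/M) \subseteq E(L/K)$ and $E(L/M) = C_{E(L/K)}(M)$, while the centralizer of $M$ in $L \rtimes G(L/K)$ is readily computed to be $L \rtimes G(L/K)^M = L \rtimes G(L/M)$ (using $gl = g(l)g$ and that $g$ centralizes $M$ iff $g \in G(L/M)$, together with Lemma \ref{bc1Jun25}), we get $E(L/M) = L \rtimes G(L/M)$, which is the definition of $L/M$ being a G-extension. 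So the key step is the centralizer computation $C_{L \rtimes G(L/K)}(M) = L \rtimes G(L/M)$, which is elementary given the support/linear-independence arguments already established in the proof of Theorem \ref{BC24Mar25}.(2), and this is the line I would write out.
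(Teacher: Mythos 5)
Your final argument is correct, and it reaches the conclusion by a more direct route than the paper. The paper's proof also begins from the identification $C_E(M)=L\rtimes G(L/M)$, but then extracts the numerical identity $|G(L/M)|=[L:M]$ by computing $\dim_K C_E(M)$ in two ways using the dimension formula of Theorem \ref{DCThm}.(3), and finally compares $\dim_M(L\rtimes G(L/M))$ with $\dim_M(E(L/M))$ to force equality. You instead close the argument in one step: $E(L/M)=C_E(M)$ (Theorem \ref{B24Mar25}.(3), or just the observation that an endomorphism is $M$-linear iff it commutes with multiplication by $M$), and $C_{L\rtimes G}(M)=\bigoplus_{g\in G^M}Lg=L\rtimes G(L/M)$ by the explicit computation $l_g(m-g(m))=0$ using the decomposition $E=\bigoplus_{g\in G}Lg$ from Theorem \ref{BC24Mar25}.(2). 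This buys two things: it avoids the Double Centralizer dimension count entirely, and it is self-contained where the paper's first step is not --- the paper cites Theorem \ref{1Jun25}.(1) for $C_E(M)=L\rtimes G(L/M)$, a result proved \emph{later} whose proof in turn invokes this lemma, so your direct centralizer computation is exactly what is needed to break that circle. Two caveats: your first two paragraphs (the attempt via $L^{G(L/M)}=M$ and the inequality juggling) are dead ends, as you yourself recognize, and should be deleted from a final write-up; and you should state the centralizer computation as an actual displayed calculation rather than calling it ``readily computed,'' since it is the entire content of the proof.
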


\begin{proof} Let  $M\in \CF (L/K)$  and $C:= C_{E}(M)$. By Theorem \ref{1Jun25}(1), $C=L\rtimes G(L/M)$ and 
$$
\dim_K(C)=\dim_K(L\rtimes G(L/M))=[L:K]|G(L/M)|.
$$
 Since each field $M\in \CF (L/K)$ is a simple $K$-subalgebra of $E$,   we have that $\dim_K(C)=\frac{\dim_K(E)}{[M:K]}$ (Theorem \ref{DCThm}.(3)) and so 
\begin{eqnarray*}
|G(L/M)| &=& \frac{[L:K]|G(L/M)|}{[L:K]}=\frac{\dim_K(C)}{[L:K]}=\frac{\dim_K(E)}{[M:K][L:K]}\\
 &=&\frac{[L:K]^2}{[M:K][L:K]}=[L:M].
\end{eqnarray*}
Therefore, 
$ \dim_M(L\rtimes G(L/M) )=[L:M]|G(L/M)|=[L:M]^2=\dim_M(E/M).$
This means that the field extension $L/M$ is a G-extension.
\end{proof}

For a finite  field extension $L/K$, let  $\CG (G)$ be the set of subgroups of the group  $G$. Theorem  \ref{1Jun25}.(3) is one of the main results of the classical Galois Theory.

 \begin{theorem}\label{1Jun25}%\marginpar{1Jun25}
Let $L/K$ be a G-extension (i.e. a Galois extension, by Proposition \ref{A8May25}). Then:

\begin{enumerate}

\item $ \CA (E,L)=\{ C_{E}(M)=L\rtimes G(L/M)=L\rtimes G^M\, | \, L\in \CF (L/K)\}$. 

\item The map
$$
\CF (L/K)\ra \CA (E,L), \;\; M\mapsto  C_{E}(M)=L\rtimes G(L/M)=L\rtimes G^M
$$ 
is a bijection with inverse
$$
A\mapsto C_{E}(A)=L^{A\cap G}.
$$
\item {\bf (The  Galois Correspondence)} 
The map
$$
\CF (L/K)\ra \CG (G), \;\; M\mapsto  G(L/M)=G^M
$$ 
is a bijection with inverse $
H\mapsto L^H.$

\end{enumerate}

\end{theorem}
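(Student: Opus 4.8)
The plan is to derive all three statements from Theorem~\ref{B24Mar25} together with the hypothesis that $L/K$ is a G-extension, feeding in Lemma~\ref{ac1Jun25} and Lemma~\ref{bc1Jun25} at the appropriate moments. The structure is: first nail down statement~1 by computing the centralizer $C_E(M)$ explicitly; then statement~2 is a reinterpretation of the bijection from Theorem~\ref{B24Mar25}.(3) once we identify each algebra in $\CA(E,L)$ with the group it contains; then statement~3 falls out of statement~2 by passing from the algebra $L\rtimes G(L/M)$ to its ``group part''.

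For statement~1: by Theorem~\ref{B24Mar25}.(2,3), every $A\in\CA(E,L)$ has the form $A=C_E(M)=E(L/M)$ for a unique $M=C_E(A)\in\CF(L/K)$. The point is that $E(L/M)=L\rtimes G(L/M)$. Indeed, by Lemma~\ref{ac1Jun25}, since $L/K$ is a G-extension the extension $L/M$ is again a G-extension, which by definition means $E(L/M)=L\rtimes G(L/M)$. (Alternatively one may note that $L^{G(L/M)}=M$ and apply Theorem~\ref{BC24Mar25}.(2) with base field $M$; but invoking Lemma~\ref{ac1Jun25} is cleaner.) Finally $G(L/M)=G^M$ by Lemma~\ref{bc1Jun25}. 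This gives the description $\CA(E,L)=\{C_E(M)=L\rtimes G(L/M)=L\rtimes G^M\mid M\in\CF(L/K)\}$.

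For statement~2: Theorem~\ref{B24Mar25}.(3) already gives that $M\mapsto C_E(M)$ is a bijection $\CF(L/K)\to\CA(E,L)$ with inverse $A\mapsto C_E(A)$. By statement~1 the forward map sends $M$ to $L\rtimes G(L/M)=L\rtimes G^M$, so only the inverse map needs rewriting: I must show $C_E(A)=L^{A\cap G}$ for $A=L\rtimes H$ where $H=G^M$ is the subgroup of $G$ attached to $A$. Here $A\cap G=H$ (since the automorphisms in $G$ are $L$-linearly independent, $A=\bigoplus_{g\in H}Lg$ meets $G$ exactly in $H$), so it suffices to check $C_E(L\rtimes H)=L^H$. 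One inclusion: an element of $E$ centralizing $L$ lies in $C_E(L)=L$ (as $L\in\Max_s(E)$), and an element $l\in L$ centralizes $L\rtimes H$ iff it centralizes every $g\in H$, i.e.\ iff $gl=lg$, i.e.\ iff $g(l)=l$ for all $g\in H$; hence $C_E(L\rtimes H)=L^H$. Combining, the inverse map $A\mapsto C_E(A)$ is $A\mapsto C_E(A)=L^{A\cap G}$, proving statement~2.

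For statement~3: by Corollary~\ref{a24Mar25}.(1) applied with $G=G(L/K)$ (noting $L\rtimes G=E$ since $L/K$ is a G-extension), $\CA(E,L)=\{L\rtimes H\mid H\le G\}$, and the assignment $H\mapsto L\rtimes H$ is a bijection onto $\CA(E,L)$ (injectivity again from $L$-linear independence of the $g\in G$). Composing the bijection of statement~2, $\CF(L/K)\to\CA(E,L)$, $M\mapsto L\rtimes G^M$, with the inverse of $H\mapsto L\rtimes H$ yields the bijection $\CF(L/K)\to\CG(G)$, $M\mapsto G^M=G(L/M)$; tracking inverses, $H\in\CG(G)$ corresponds to the algebra $L\rtimes H$, which corresponds back to the field $C_E(L\rtimes H)=L^H$, so the inverse is $H\mapsto L^H$. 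I expect the only mildly delicate point to be the bookkeeping that $A\cap G$ equals the subgroup indexing $A$ and that the group $H$ determines the algebra $L\rtimes H$ uniquely — both resting on the $L$-linear independence of distinct automorphisms already established in Theorem~\ref{BC24Mar25}; everything else is a formal transport of the bijection in Theorem~\ref{B24Mar25}.(3) along these identifications.
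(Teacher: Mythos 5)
Your proposal is correct and follows essentially the same route as the paper: statement~1 via Theorem~\ref{B24Mar25}.(3) plus Lemma~\ref{ac1Jun25} and Lemma~\ref{bc1Jun25}, statement~2 by identifying $A\cap G$ with $G(L/M)$ and computing $C_E(L\rtimes H)=C_E(L)\cap C_E(H)=L^H$, and statement~3 by transporting the bijection along $H\mapsto L\rtimes H$. The only difference is cosmetic: you explicitly invoke Corollary~\ref{a24Mar25}.(1) to get surjectivity onto \emph{all} subgroups of $G$ in statement~3, a point the paper leaves implicit in its remark that the algebra $L\rtimes G(L/M)$ is uniquely determined by its group part.
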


\begin{proof}% Recall that  Galois finite field extensions are G-extensions and vice versa (Proposition \ref{A8May25}). 
  
 1.  By  Theorem \ref{B24Mar25}.(3),
 $ \CA (E,L)=\{ C_{E}(M)=E(L/M)\, | \, L\in \CF (L/K)\}$.  The finite field extension $L/K$ is a B-extension, hence so is the field extension  $L/M$ for each field $M\in \CF (L/K)$. Therefore, 
$$
E(L/M)=L\rtimes G(L/M).
$$
By Lemma \ref{bc1Jun25}, $L\rtimes G(L/M)=L\rtimes G^M$, and statement 1 follows.

2.  In view of Theorem \ref{B24Mar25}.(3) and statement 1, it suffices that show that  the formula  for the inverse map in the theorem holds.
 Let $A\in \CA (E,L)$. By statement 1,  $A= L\rtimes G(L/M)$ for a unique field  $M\in \CF (L/K)$. The finite field extension $L/K$ is  normal/a B-extension. Hence,  $E=L\rtimes G$. It follows from the inclusions $G(L/M)\subseteq G$  and 
$$
A= L\rtimes G(L/M)\subseteq E=L\rtimes G=\bigoplus_{g\in G}Lg
$$
that $G(L/M)=A\cap   G$. Now we finish the proof of statement 2: 
\begin{eqnarray*}
 C_{E}(A)&=& C_{E}(L)\cap C_{E}( G(L/M))= L\cap  C_{E}( G(L/M))\\
  &=&  L^{G(L/M)}=L^{A\cap G}.
\end{eqnarray*}
3.  Recall that  Galois finite field extensions are G-extensions and vice versa (Proposition \ref{A8May25}). By statement 1, for each field $M\in \CF (L/K)$,  the centralizer 
 $C_{E}(M)=L\rtimes G(L/M)=L\rtimes G^M$ is uniquely determined by the group $G(L/M)=G^M$, and so statement 3 follows from statement 2.
\end{proof}

For a G-extension $L/K$ and its subfield $\G\in \CF (L/K)$, Lemma \ref{a19Jun25} is a criterion for the subgroup  $G(L/\G )$ of $G$ being a normal subgroup. 

\begin{lemma}\label{a19Jun25}%\marginpar{a19Jun25}
Let $L/K$ be a G-extension and $\G\in \CF (L/K)$. Then the subgroup $G(L/\G )$ of $G$ is a normal subgroup iff $g(\G )\subseteq \G$ for all elements $g\in G$. 
\end{lemma}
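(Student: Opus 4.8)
The plan is to reduce both implications to the elementary conjugation identity
$$ G(L/g(\G)) \;=\; g\,G(L/\G)\,g^{-1}\qquad\text{for every }g\in G, $$
and then to use, for the nontrivial direction, that the Galois Correspondence of Theorem \ref{1Jun25}.(3) has already been established.

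First I would record the preliminaries. Since each $g\in G$ is a $K$-algebra automorphism of $L$ and $\G$ is a field with $K\subseteq\G\subseteq L$, the image $g(\G)$ is again a subfield of $L$ containing $K$, so $g(\G)\in\CF(L/K)$; moreover $g|_\G$ is an injective $K$-linear map on the finite dimensional space $\G$, hence a bijection onto $g(\G)$, and if $g(\G)\subseteq\G$ for all $g\in G$ then in fact $g(\G)=\G$ for all $g\in G$ (apply the inclusion also to $g^{-1}$). The conjugation identity then follows from the chain of equivalences: for $g,h\in G$, $h\in G(L/g(\G))$ iff $h(g(x))=g(x)$ for all $x\in\G$ iff $(g^{-1}hg)(x)=x$ for all $x\in\G$ iff $g^{-1}hg\in G(L/\G)$ iff $h\in g\,G(L/\G)\,g^{-1}$.

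For the implication $(\Leftarrow)$, assume $g(\G)\subseteq\G$, hence $g(\G)=\G$, for all $g\in G$. Then the conjugation identity gives $g\,G(L/\G)\,g^{-1}=G(L/g(\G))=G(L/\G)$ for every $g\in G$, i.e. $G(L/\G)$ is normal in $G$. This direction uses only the $G$-action on subfields and no hypothesis on $L/K$. For the implication $(\Rightarrow)$, assume $G(L/\G)$ is normal in $G$. Here I would invoke that $L/K$ is a G-extension: by Theorem \ref{1Jun25}.(3) the map $M\mapsto G(L/M)$ is a bijection on $\CF(L/K)$, with $L^{G(L/M)}=M$. Fix $g\in G$; by the conjugation identity and normality, $G(L/g(\G))=g\,G(L/\G)\,g^{-1}=G(L/\G)$, and injectivity of the correspondence forces $g(\G)=\G$, in particular $g(\G)\subseteq\G$. (Alternatively and more directly: for $x\in\G=L^{G(L/\G)}$ and $h\in G(L/\G)$ one has $h(g(x))=g\bigl((g^{-1}hg)(x)\bigr)=g(x)$, since $g^{-1}hg\in G(L/\G)$ by normality; thus $g(x)\in L^{G(L/\G)}=\G$.)

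The argument is essentially routine. The only point that genuinely uses the running hypothesis — and hence the only thing worth flagging — is that the $(\Rightarrow)$ direction relies on the injectivity half of the already-proven Galois Correspondence (equivalently, on $L^{G(L/\G)}=\G$), which is exactly where "G-extension" enters; everything else is formal manipulation of the action of $G$ on the subfields of $L$.
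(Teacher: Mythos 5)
Your proof is correct and, at its core, the same as the paper's: the $(\Rightarrow)$ direction in both cases conjugates $h\in G(L/\G)$ by $g$ and concludes via $L^{G(L/\G)}=\G$ from Theorem \ref{1Jun25}.(3) (your ``alternative'' computation is verbatim the paper's argument), while the $(\Leftarrow)$ direction is the same elementary check, merely repackaged through the conjugation identity $G(L/g(\G))=g\,G(L/\G)\,g^{-1}$. No gaps; the only difference is cosmetic organization.
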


\begin{proof} $(\Rightarrow )$ Suppose that the subgroup $G(L/\G )$ of $G$ is a normal subgroup and $\g \in \G$. We have to show that $g(\g )\in \G$ for all elements $g\in G$. Let $h\in G(L/\G)$. Then $h':=g^{-1}hg\in G(L/\G)$, by the normality of the subgroup $G(L/\G )$ of $G$. Now, for all elements $\g \in \G$, 
$$ \g = h'(\g) = g^{-1}hg(\g),$$ i.e. $g (\g) = hg(\g)$ for all elements $h\in G(L/\G )$. By Theorem \ref{1Jun25}.(3), $g(\g) \in L^{G(L/\G )}=\G$, as required.

$(\Leftarrow )$ Suppose that $g(\G )\subseteq \G$ for all elements $g\in G$. Let $h\in G(L/\G )$. We have to show that $ghg^{-1}\in G(L/\G )$ for all elements $g\in G$. 
 For all elements $\g \in \G$, 
 $$
 ghg^{-1}(\g)=gg^{-1}(\g)=\g,
 $$
  and so $ghg^{-1}\in G(L/\G )$.
\end{proof}

A {\bf splitting field} of a set $S\subseteq K[x]$ of polynomials is  a field  that is obtained from $K$ by adding  the roots of all the polynomials in $S$.  For the set $S$, the slitting field is unique up to a $K$-isomorphism.

 Let $L/K$ be a finite field extension. Then its automorphism group $G$ acts on the field $L$,
 $$
 G\times L\ra L, \;\; (g,l)\mapsto g(l).
$$
 For each element $l\in L$, the subset of $L$,   $Gl:=\{ g(l)\, | \, g\in G\}$, is called the $G$-{\em orbit} of the element $l$.
 
 Lemma \ref{b19Jun25} shows that all G-extensions are splitting fields. 

\begin{lemma}\label{b19Jun25}%\marginpar{b19Jun25}
Let a finite field extension  $L/K$ be a G-extension. Then:
\begin{enumerate}

\item For each element $l\in L$, the minimal polynomial of the element $l$ over the field $K$ is equal to $m_l(x)=\prod_{l'\in Gl}(x-l')\in K[x]$. 

\item The field $L$ is a splitting field.

\item Suppose that a field extension $\G\in \CF (L/K)$ is a G-extension. Then $g(\G)=\G$ for all elements $g\in G$.

\end{enumerate} 
\end{lemma}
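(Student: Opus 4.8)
The plan is to establish statement 1 first, since statements 2 and 3 follow from it with little extra work. The one preliminary fact needed is that $L^G=K$ for a G-extension: by Theorem \ref{BC24Mar25}.(1) the extension $L/L^G$ is Galois with Galois group $G$, so $[L:L^G]=|G|$, while $|G|=[L:K]$ by Proposition \ref{A8May25}; since $K\subseteq L^G\subseteq L$ this forces $L^G=K$.

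For statement 1, fix $l\in L$ and set $f(x):=\prod_{l'\in Gl}(x-l')$, a monic polynomial with distinct linear factors. Each coefficient of $f$ is a symmetric function of the finite set $Gl$, which $G$ permutes, so every $g\in G$ fixes the coefficients of $f$; hence $f(x)\in L^G[x]=K[x]$. Since $l\in Gl$ (take $g=\id$), we have $f(l)=0$, so the minimal polynomial $m_l(x)$ of $l$ over $K$ divides $f(x)$. Conversely, because $m_l(x)\in K[x]$ and each $g\in G$ fixes $K$, applying $g$ to $m_l(l)=0$ gives $m_l(g(l))=0$; thus every $l'\in Gl$ is a root of $m_l$, and since these roots are distinct, $f(x)=\prod_{l'\in Gl}(x-l')$ divides $m_l(x)$. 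Both being monic, $f=m_l$, which is statement 1. For statement 2, pick a $K$-basis $l_1,\ldots,l_n$ of $L$; by statement 1 each $m_{l_i}$ splits into distinct linear factors over $L$, and $L=K(l_1,\ldots,l_n)$ is generated over $K$ by roots of these polynomials, all of whose roots already lie in $L$; hence $L$ is the splitting field of $S=\{m_{l_1},\ldots,m_{l_n}\}\subseteq K[x]$.

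For statement 3, let $\G\in\CF(L/K)$ be a G-extension and take $\g\in\G$. Applying statement 1 to the G-extension $\G/K$ (with its own automorphism group $G(\G/K)$), the minimal polynomial of $\g$ over $K$ is $m_\g(x)=\prod_{\g'\in G(\G/K)\g}(x-\g')$, so all of its roots lie in $\G$. Now any $g\in G=G(L/K)$ fixes $K$, hence $g(\g)$ is again a root of $m_\g(x)$ and therefore lies in $\G$; this shows $g(\G)\subseteq\G$. Since $g$ is $K$-linear and injective on the finite-dimensional $K$-space $\G$, it restricts to a bijection of $\G$, i.e. $g(\G)=\G$. The only point that requires a little care — and the place I would double-check — is the application of statement 1 inside statement 3: one must observe that the minimal polynomial of $\g$ over $K$ is intrinsic (independent of whether $\g$ is viewed inside $\G$ or inside $L$), so that the factorization supplied by statement 1 for $\G/K$ is exactly the polynomial whose roots $g\in G(L/K)$ permutes. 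Everything else is routine.
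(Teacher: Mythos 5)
Your proof is correct and follows essentially the same route as the paper: the $G$-invariance of $\prod_{l'\in Gl}(x-l')$ places it in $L^G[x]=K[x]$, and the fact that $G$ permutes the roots of any polynomial in $K[x]$ does the rest. The only (harmless) variations are that you establish part 1 by mutual divisibility where the paper proves irreducibility of $m_l(x)$ directly, and in part 3 you argue at the level of individual minimal polynomials rather than invoking, as the paper does, the stability of splitting fields under $K$-automorphisms of an overfield.
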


\begin{proof} 1. Since $L/K$ is a G-extension, $L^G=K$ (Theorem \ref{1Jun25}.(3)). By the definition, $$m_l(x)\in K^G[x]= K[x].$$ The polynomial  $m_l(x)$ is monic (i.e. the leading coefficient of $m_l(x)$ is 1). It remains  to show that the polynomial  $m_l(x)$ is irreducible over $K$. Suppose that a nonscalar polynomial  $f(x)\in K[x]$ is a divisor of the polynomial $m_l(x)$ and $l'\in Gl$ is its root. Then for all $g\in G$, $$0=g(0)=g(f(l'))=f(g(l')),$$ and so the set $Gl'=Gl$ consists of roots of the polynomial $f(x)$. Therefore, $f(x)=m_l(x)$.

2. The finite field extension $L/K$ is the composite of its subfields $K(l_1), \ldots , K(l_m)$ for          some elements $l_1, \ldots , l_m\in L$. By statement 1, the field $L$ is the splitting field of the polynomial $\prod_{i=1}^m m_{l_i}(x)$.

3. By statement 2,  the G-extension $\G\in \CF (L/K)$, is a splitting field over $K$. Therefore,  $g(\G)=\G$ for all elements $g\in G$. 
\end{proof}

For a G-extension, Theorem \ref{19Jun25} gives a characterizations of its G-subextensions. 

\begin{theorem}\label{19Jun25}%\marginpar{19Jun25}
Let a finite field extension  $L/K$ be a G-extension and $\G\in \CF (L/K)$. Then the following statements are equivalent:
\begin{enumerate}

\item $\G/K$ is a G-extension.

\item   For all elements $g\in G$, $g(\G)=\G$. 

\item The group $G(L/\G)$ is a normal subgroup of $G$.

\end{enumerate} 
Suppose that $\G \in \CF (L/K)$ is a G-extension. Then (by statement 2) there is a short exact sequcne of group homomorphisms<
$$
1\ra G(L/\G)\ra G \stackrel{res}{\ra}G(\G / K)\ra 1
$$ where $res$ is the restriction map. In particular, $G(\G / K)\simeq G/G(L/\G)$.

\end{theorem}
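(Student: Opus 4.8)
The plan is to prove the three equivalences in a cycle $(1)\Rightarrow(2)\Rightarrow(3)\Rightarrow(1)$, drawing on the lemmas already established.

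\textbf{Implications $(1)\Rightarrow(2)$ and $(2)\Rightarrow(3)$.} The first implication is immediate from Lemma \ref{b19Jun25}.(3): if $\G/K$ is a G-extension then $\G$ is a splitting field over $K$, and a splitting field sitting inside $L$ is stable under every $K$-automorphism of $L$, so $g(\G)=\G$ for all $g\in G$. For $(2)\Rightarrow(3)$, from $g(\G)=\G$ for all $g\in G$ we certainly get $g(\G)\subseteq\G$ for all $g\in G$, and then Lemma \ref{a19Jun25} gives at once that $G(L/\G)$ is a normal subgroup of $G$.

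\textbf{Implication $(3)\Rightarrow(1)$.} This is the step I expect to carry the weight of the argument. Assume $N:=G(L/\G)$ is normal in $G$. By the $(\Leftarrow)$ direction of Lemma \ref{a19Jun25}, $g(\G)\subseteq\G$ for all $g\in G$; applying this to $g$ and to $g^{-1}$ shows $g(\G)=\G$, so restriction gives a well-defined group homomorphism $\mathrm{res}\colon G\to G(\G/K)$, $g\mapsto g|_\G$, whose kernel is exactly $G(L/\G)=N$. By the Galois Correspondence for $L/K$ (Theorem \ref{1Jun25}.(3)), $\G=L^{N}$, and since $L/K$ is a G-extension, $|G|=[L:K]$ and $[L:\G]=|G(L/\G)|=|N|$ (this last equality is contained in the proof of Lemma \ref{ac1Jun25}). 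Hence the image of $\mathrm{res}$ has order $|G/N|=[L:K]/[L:\G]=[\G:K]$, which forces $|G(\G/K)|\ge[\G:K]$; combined with the general bound $|G(\G/K)|\le[\G:K]$ this yields $|G(\G/K)|=[\G:K]$, i.e. $\G/K$ is a G-extension by Proposition \ref{A8May25}. The main obstacle here is bookkeeping: one must be careful to extract $[L:\G]=|G(L/\G)|$ (available from the computation in Lemma \ref{ac1Jun25}) and to note that $\mathrm{res}$ is well-defined only after establishing $g(\G)=\G$.

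\textbf{The final assertion.} Once $(3)$ holds (equivalently, once $\G/K$ is a G-extension), the discussion above already produces the homomorphism $\mathrm{res}\colon G\to G(\G/K)$ with kernel $G(L/\G)$, and the order count $|\mathrm{im}(\mathrm{res})|=|G/N|=[\G:K]=|G(\G/K)|$ shows $\mathrm{res}$ is surjective. Therefore the sequence
$$
1\ra G(L/\G)\ra G \stackrel{res}{\ra}G(\G / K)\ra 1
$$
is exact, and the first isomorphism theorem gives $G(\G/K)\simeq G/G(L/\G)$. This completes the proof.
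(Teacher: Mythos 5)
Your proposal is correct, and its overall architecture matches the paper's: $(1)\Rightarrow(2)$ via Lemma \ref{b19Jun25}.(3), the link between (2) and (3) via Lemma \ref{a19Jun25}, and the return to (1) via an order count on the restriction homomorphism. The one genuine difference is in the closing implication. The paper proves $(2)\Rightarrow(1)$ and, to show $res$ is surjective, asserts that $\G/K$ is a G-extension ``by Lemma \ref{ac1Jun25}'' so that $|G(\G/K)|=[\G:K]$ --- but that lemma only yields that $L/M$ is a G-extension for $M\in\CF(L/K)$, not that $M/K$ is, so as written the paper assumes the conclusion it is heading for. You instead prove $(3)\Rightarrow(1)$ by computing $|\im(res)|=|G|/|G(L/\G)|=[L:K]/[L:\G]=[\G:K]$ (using $[L:\G]=|G(L/\G)|$ from Lemma \ref{ac1Jun25}) and then squeezing against the general bound $|G(\G/K)|\le[\G:K]$ (which is exactly the inequality $\dim_K(\G\rtimes G(\G/K))\le\dim_K(E(\G/K))$ underlying Proposition \ref{A8May25}). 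This buys you a non-circular derivation of $|G(\G/K)|=[\G:K]$ and simultaneously delivers surjectivity of $res$, hence the exact sequence and $G(\G/K)\simeq G/G(L/\G)$, in one stroke. Your version is the one I would keep; the only bookkeeping point worth making explicit is that the upper bound $|G(\G/K)|\le[\G:K]$ is being imported from the linear independence of automorphisms (Theorem \ref{BC24Mar25}.(2)) applied to $\G/K$.
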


\begin{proof} $(1\Rightarrow 2)$  Lemma \ref{b19Jun25}.(3).

 $(2\Leftrightarrow 3)$ Lemma \ref{a19Jun25}.

$(2\Rightarrow 1)$ By statement 2, there is a short exact sequence  of group homomorphisms:
$$
1\ra G(L/\G)\ra G \stackrel{res}{\ra}G(\G / K).
$$ 
We have to show that the map $res$ is surjective.
Since the field extensions $L/K$, $\G/K$ and $G/\G$ are G-extensions (Lemma \ref{ac1Jun25}), we have the equalities  
$$
|G|=[L:K],\;\; |G(\G / K)|=[\G :K]\;\; {\rm  and}\;\;|G(L/\G)|=[L:\G],
$$
 by Proposition \ref{A8May25}.  Clearly,  $H:=\im (res)\simeq G/G(L/\G )$ and 
$$
|H|=\frac{|G|}{|G(L/\G)|}=\frac{[L:K]}{[L:\G]}=[\G:K] =|G(M/K)|.
$$ 
Therefore,  $H=G(M/K)$ (since $H\subseteq G(\G /K)$ and $|H|=|G(M/K)|$), as required. 
\end{proof}

{\bf Galois subfields of   Galois finite field extensions.}  Theorem  \ref{11Jun25} gives  an order reversing bijection between 
 Galois subfields of a   Galois  finite field extension $L/K$ and the set $\CA (E,L, G)$ of $G$-stable subalgebras of the algebra $E$ that contains the field $L$. Recall that (Corollary \ref{a24Mar25}.(2)), 
 $$
 \CA (E,L, G)=\{L\rtimes  N \, | \, N\;\;{\rm is\; a\; normal\; subgroup\; of}\;\; G \}.
 $$
For a finite field extension $L/K$, let $\CG (L/K)$ be the set of  Galois  subfields $N/K$ of $L/K$ and 
 $\CN(G)$ be the set of normal subgroups of the group $ G$. Recall that the set $\CG (L/K)$ is the set of all $K$-subfields of $L$ that are  G-extensions (Proposition \ref{A8May25}).

\begin{theorem}\label{11Jun25}%\marginpar{11Jun25}
Let $L/K$ be a  G-extension (i.e. a Galois extension). Then: 
\begin{enumerate}

\item $\CA (E,L, G)=\{C_E(\G )=L\rtimes G(L/\G)\, | \,$ $ \G\in \CG  (L/K)  \}=\{L\rtimes N\, | \, N\in \CN  (G)  \}$ and $\CN (G)= \{ G(L/\G)\, | \,$ $ \G\in \CG  (L/K)  \}$.

\item The map
$$
\CG (L/K)\ra  \CA (E,L, G), \;\; \G \mapsto  C_{E}(M)=L\rtimes G(L/\G) 
$$
 is a bijection with inverse
$$ A\mapsto C_{E}(A)=L^{A\cap G}.
$$
\item {\bf (The  Galois Correspondence for Galois subfields)} 
The map
$$
\CG (L/K)\ra \CN (G), \;\; \G\mapsto  G(L/\G)=G^{\G}
$$ 
is a bijection with inverse $
\G\mapsto L^{\G}.$
\end{enumerate}
\end{theorem}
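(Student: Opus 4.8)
The plan is to derive all of Theorem~\ref{11Jun25} by \emph{restricting} the bijections of Theorem~\ref{1Jun25} to the sub-collections $\CG(L/K)\subseteq\CF(L/K)$ and $\CA(E,L,G)\subseteq\CA(E,L)$; the only genuinely new ingredient is the identification of which subfields (resp.\ which subalgebras) lie in the smaller collection, and for that I would use Theorem~\ref{19Jun25} together with Corollary~\ref{a24Mar25}.

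\textbf{Step 1: the two descriptions in (1).} Since $L/K$ is a G-extension we have $E=L\rtimes G$, so Corollary~\ref{a24Mar25} applies verbatim to $E$: every $A\in\CA(E,L)$ equals $L\rtimes H$ for a unique subgroup $H$ of $G$, and $A$ is $G$-stable iff $H$ is normal in $G$. (Here the condition $gAg^{-1}\subseteq A$ for all $g\in G$ from the definition of $\CA(E,L,G)$ coincides with the condition $\o_g(A)=A$ for all $g$ used in Corollary~\ref{a24Mar25}, because each $g$ is a unit of $E$, whence $\o_g(a)=gag^{-1}$, and $G$ is finite, so the one-sided inclusions already force equality.) Thus $\CA(E,L,G)=\{L\rtimes N\mid N\in\CN(G)\}$. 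On the other hand, by Theorem~\ref{1Jun25}.(1,2) each $A\in\CA(E,L)$ also equals $C_E(M)=L\rtimes G(L/M)$ for a unique $M\in\CF(L/K)$, and $A$ is $G$-stable iff $G(L/M)$ is normal in $G$ iff (Theorem~\ref{19Jun25}) $M/K$ is a G-extension, i.e.\ $M\in\CG(L/K)$. This gives $\CA(E,L,G)=\{C_E(\G)=L\rtimes G(L/\G)\mid\G\in\CG(L/K)\}$. Feeding the same equivalence into the bijection $\CF(L/K)\to\CG(G)$, $M\mapsto G(L/M)$, of Theorem~\ref{1Jun25}.(3) yields $\CN(G)=\{G(L/\G)\mid\G\in\CG(L/K)\}$.

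\textbf{Steps 2 and 3: the two correspondences.} By Theorem~\ref{1Jun25}.(2), $M\mapsto C_E(M)$ is a bijection $\CF(L/K)\to\CA(E,L)$ with inverse $A\mapsto C_E(A)=L^{A\cap G}$. By Step~1 this bijection carries $\CG(L/K)$ onto $\CA(E,L,G)$, hence restricts to a bijection between them; its inverse is the restriction of $A\mapsto L^{A\cap G}$, and for $A=L\rtimes N$ one has $A\cap G=N$, so $C_E(A)=L^N$. This is statement~(2). For statement~(3) I would argue exactly as Theorem~\ref{1Jun25}.(3) is deduced from Theorem~\ref{1Jun25}.(2): by Step~1 the algebra $C_E(\G)=L\rtimes G(L/\G)$ is uniquely determined by the normal subgroup $G(L/\G)=G^{\G}$ (Lemma~\ref{bc1Jun25}), so replacing $\CA(E,L,G)$ by $\CN(G)$ in~(2) turns it into the asserted bijection $\CG(L/K)\to\CN(G)$, $\G\mapsto G(L/\G)$, whose inverse $N\mapsto L^N$ sends $G(L/\G)$ back to $L^{G(L/\G)}=\G$ by Theorem~\ref{1Jun25}.(3).

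\textbf{Expected obstacle.} Granting the earlier results, everything reduces to bookkeeping; the two points that need to be stated with care are (a) the compatibility of the ideal-theoretic $G$-stability $gAg^{-1}\subseteq A$ with the inner-automorphism formulation of Corollary~\ref{a24Mar25}, and (b) the matching of ``$G(L/\G)$ normal in $G$'' with ``$\G/K$ is a G-extension'', which is precisely Theorem~\ref{19Jun25}. Once these identifications are pinned down, (1)--(3) follow by a pure restriction argument.
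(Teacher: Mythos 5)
Your proposal is correct and follows essentially the same route as the paper: both derive statement (1) by combining Corollary \ref{a24Mar25}.(2) (identifying $\CA(E,L,G)$ with $\{L\rtimes N\mid N\in\CN(G)\}$) with Theorem \ref{19Jun25} and Proposition \ref{A8May25} (matching normal subgroups with Galois subfields), and then obtain (2) and (3) by restricting the bijections of Theorem \ref{1Jun25}. Your extra remarks on the compatibility of the two formulations of $G$-stability and on $A\cap G=N$ are details the paper leaves implicit, but they do not change the argument.
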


\begin{proof}  By Proposition \ref{A8May25} and Theorem \ref{19Jun25}, 
$$
\CN (G)\stackrel{{\rm Thm.}\, \ref{19Jun25}}{=}
\{ G(L/\G)\, | \,\G\in \CF  (L/K),\; \G\;\; {\rm is\; a\;  G-extension}  \}
\stackrel{{\rm Pr.}\, \ref{A8May25}}{=} \{ G(L/\G)\, | \,\G\in \CG  (L/K)  \}.$$
Therefore,
$$\CA (E,L, G)\stackrel{{\rm Cor.}\, \ref{a24Mar25}.(2)}{=}\{L\rtimes N\, | \, N\in \CN  (G)  \}=\{C_E(\G )\stackrel{{\rm Thm.}\, \ref{1Jun25}.(1)}{=}L\rtimes G(L/\G)\, | \, \G\in \CG  (L/K)  \}.
$$
Now, statements 1--3 of the theorem follows from statements 1--3 of Theorem \ref{1Jun25}, respectively.
\end{proof}

 {\bf Licence.} For the purpose of open access, the author has applied a Creative Commons Attribution (CC BY) licence to any Author Accepted Manuscript version arising from this submission.

{\bf Declaration of interests.} The authors declare that they have no known competing financial interests or personal relationships that could have appeared to influence the work reported in this paper.

%{\bf Disclosure statement.} No potential conflict of interest was reported by the author.

{\bf Data availability statement.} Data sharing not applicable – no new data generated.

{\bf Funding.} This research received no specific grant from any funding agency in the public, commercial, or not-for-profit sectors.

\small{

School of Mathematical  and Physical Sciences

Division of Mathematics

University of Sheffield

Hicks Building

Sheffield S3 7RH

UK

email: v.bavula@sheffield.ac.uk}

\end{document}